\newtheorem{thm}{Theorem}[section]
\newtheorem{lemma}[thm]{Lemma}
\newtheorem{cor}[thm]{Corollary}
\theoremstyle{definition}
\newtheorem{defn}[thm]{Definition}
\theoremstyle{remark}
\numberwithin{equation}{section}
\newcommand*\wrapletters[1]{\wr@pletters#1\@nil}
\def\wr@pletters#1#2\@nil{#1\allowbreak\if&#2&\else\wr@pletters#2\@nil\fi}
\def\alp{{\alpha}} 
\def\bet{{\beta}}  
\def\gam{{\gamma}}
\def\lam{{\lambda}}
\def\eps{\varepsilon}
\def\le{\leqslant} \def\ge{\geqslant}
\def\d{{\,{\rm d}}}
\def \bN {\mathbb N}
\def \bQ {\mathbb Q}
\def \bR {\mathbb R}
\def \bZ {\mathbb Z}
\def \bb {\mathbf b}
\def \bc {\mathbf c}
\def \br {\mathbf r}
\def \bu {\mathbf u}
\def \bv {\mathbf v}
\def \bx {\mathbf x}
\def \by {\mathbf y}
\def \bzero {\mathbf 0}
\def \bmu {{\boldsymbol{\mu}}}
\def \bbeta {\boldsymbol{\beta}}
\def \bgam {\boldsymbol{\gam}}
\def \bxi {{\boldsymbol{\xi}}}
\def \fm {\mathfrak m}
\def \fn {\mathfrak n}
\def \fr {\mathfrak r}
\def \ft {\mathfrak t}
\def \fM {\mathfrak M}
\def \fN {\mathfrak N}
\def \fR {\mathfrak R}
\def \fU {\mathfrak U}
\def \cI {\mathcal I}
\def \cR {\mathcal R}
\def \cV {\mathcal V}
\def \hZ {\hat{Z}}
\def \tf {\tilde f}
\def \sinc {\mathrm{sinc}}
\def \meas {\mathrm{meas}}
\begin{document}
\title[Sums of cubes with shifts]{Sums of cubes with shifts}
\author[Sam Chow]{Sam Chow}
\address{School of Mathematics, University of Bristol, University Walk, Clifton, Bristol BS8 1TW, United Kingdom}
\email{Sam.Chow@bristol.ac.uk}
\subjclass[2010]{11D75, 11E76, 11P05}
\keywords{Diophantine inequalities, forms in many variables, inhomogeneous polynomials}
\thanks{}
\date{}
\begin{abstract} 
Let $\mu_1, \ldots, \mu_s$ be real numbers, with $\mu_1$ irrational. We investigate sums of shifted cubes $F(x_1,\ldots,x_s) = (x_1 - \mu_1)^3 + \ldots + (x_s - \mu_s)^3$. We show that if $\eta$ is real, $\tau >0$ is sufficiently large, and $s \ge 9$, then there exist integers $x_1 > \mu_1, \ldots, x_s > \mu_s$ such that $|F(\bx)- \tau| < \eta$. This is a real analogue to Waring's problem. We then prove a full density result of the same flavour for $s \ge 5$. For $s \ge 11$, we provide an asymptotic formula. If $s \ge 6$ then $F(\bZ^s)$ is dense on the reals. Given nine variables, we can generalise this to sums of univariate cubic polynomials.
\end{abstract}
\maketitle

\section{Introduction}
\label{intro}

Research on diophantine inequalities in many variables has hitherto focussed predominantly on inequalities of the shape
\begin{equation} \label{shape}
|\lam_1 x_1^k + \ldots + \lam_s x_s^k| < \eta.
\end{equation}
It is hoped that understanding such inequalities will provide prophetic insights into general diophantine inequalities. In this sense, inequalities of the form \eqref{shape} play a r\^ole analogous to that played by Waring's problem in the context of diophantine equations in many variables. We propound a new analogue to Waring's problem. Let $s$ be a positive integer, and let $\mu_1, \ldots, \mu_s$ be real numbers, with $\mu_1$ irrational. We investigate the values taken by sums of shifted cubes 
\[ F(x_1,\ldots,x_s) = (x_1 - \mu_1)^3 + \ldots + (x_s - \mu_s)^3 \]
for integers $x_i > \mu_i$ ($1 \le i \le s$). Let $\eta > 0$ be a real number.

\begin{thm} \label{Waring} 
Let $s \ge 9$, and let $\tau$ be a sufficiently large positive real number. Then there exist integers $x_1 > \mu_1, \ldots, x_s > \mu_s$ such that
\begin{equation} \label{main} 
|F(\bx) - \tau| < \eta.
\end{equation}
\end{thm}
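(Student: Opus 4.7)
The plan is to apply the Davenport--Heilbronn method adapted to the inhomogeneous setting. Let $P = c\tau^{1/3}$ for a small constant $c > 0$ and set
\[
g_i(\alpha) = \sum_{\mu_i < x \le \mu_i + P} e(\alpha(x-\mu_i)^3), \qquad 1 \le i \le s.
\]
Take $K$ to be a Fej\'er-type kernel whose Fourier transform $\widehat K$ is the tent function supported on $[-\eta,\eta]$ with value $1$ at the origin; then $K$ is non-negative, $K(\alpha) \ll \min(\eta, (\eta\alpha^2)^{-1})$, and Fourier inversion gives
\[
N(\tau) := \int_{-\infty}^\infty \prod_{i=1}^s g_i(\alpha) \, e(-\alpha \tau)\, K(\alpha)\, d\alpha = \sum_{\bx} \widehat K(F(\bx) - \tau),
\]
which is non-negative and strictly positive exactly when some admissible $\bx$ satisfies \eqref{main}. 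It suffices to show $N(\tau) > 0$ for $\tau$ large.

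Dissect $\bR = \fM \cup \fm \cup \ft$ with
\[
\fM = \{|\alpha| \le P^{-3+\delta}\}, \quad \fm = \{P^{-3+\delta} < |\alpha| \le T\}, \quad \ft = \{|\alpha|>T\},
\]
for $\delta>0$ small and $T$ a sufficiently large constant. On $\fM$ I would replace each $g_i(\alpha)$ by the integral $v(\alpha) = \int_0^P e(\alpha u^3)\,du$ via Euler--Maclaurin, with $O(1)$ pointwise error; extending the resulting $\int v^s e(-\alpha\tau) K$ to all of $\bR$ with negligible tail error yields a singular integral that evaluates, by the standard Fourier-analytic interpretation, to $\gg \eta P^{s-3}$ (reflecting the positive real density of solutions to $\sum u_i^3 = \tau$). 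On $\ft$ I would combine the decay $K(\alpha) \ll (\eta\alpha^2)^{-1}$ with a Parseval bound for $|g_i g_j|$ and trivial bounds on the remaining factors, to obtain a contribution of size $O(\eta P^{s-3}/T)$, absorbed by the main term once $T$ is chosen large.

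The crux is the minor arc estimate, which asks for a sup bound
\[
\sup_{\alpha \in \fm} |g_1(\alpha)| \ll P^{1-\rho}
\]
for some $\rho > 0$. Expanding $(x-\mu_1)^3 = x^3 - 3\mu_1 x^2 + 3\mu_1^2 x - \mu_1^3$, the exponent is a cubic polynomial in $x$ with leading coefficient $\alpha$ and quadratic coefficient $-3\alpha\mu_1$. Dirichlet gives $\alpha = a/q + \beta$; when $q$ is in the range where Weyl's inequality for the cubic coefficient is effective, the bound follows at once. When $q$ is small (the pseudo-major regime), neither Weyl nor Euler--Maclaurin applies directly, and here the irrationality of $\mu_1$ must be invoked: the quadratic coefficient $-3\alpha\mu_1$ is close to $-3a\mu_1/q$, which is irrational, so a secondary Weyl differencing argument using a Dirichlet approximation to $\mu_1$ produces the required saving. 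Combining this sup bound with Hua's inequality $\int_0^1|g_i|^8\,d\alpha \ll P^{5+\eps}$ via H\"older (valid precisely when $s \ge 9$) gives
\[
\int_\fm \Bigl|\prod_{i=1}^s g_i(\alpha)\Bigr| K(\alpha)\,d\alpha \ll \eta P^{s-3-\rho'}
\]
for some $\rho' > 0$, which is dominated by the main term.

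Summing the three contributions yields $N(\tau) \gg \eta P^{s-3} > 0$, producing the integer points required. The main obstacle is the minor arc sup bound for $g_1$, specifically at the pseudo-major $\alpha$ at which classical Weyl is ineffective; precisely here the hypothesis $\mu_1 \notin \bQ$ must be leveraged, and the quantitative interplay between the Diophantine approximability of $\alpha$ and of $\alpha\mu_1$ is what dictates the threshold $s \ge 9$.
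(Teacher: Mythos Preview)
Your minor-arc treatment contains a genuine gap: the asserted power-saving bound $\sup_{\alpha\in\fm}|g_1(\alpha)|\ll P^{1-\rho}$ is not available for an arbitrary irrational $\mu_1$. At a pseudo-major point such as $\alpha=a/q$ with $q$ small, the cubic term $(a/q)x^3$ reduces modulo~$1$ and $|g_1(a/q)|$ is essentially a quadratic Weyl sum with leading coefficient $-3a\mu_1/q$. The saving one extracts from such a sum is governed by the continued-fraction denominators of $\mu_1$; when $\mu_1$ is a Liouville number these denominators are so sparse that, along a sequence of $P\to\infty$, the best one can prove is $|g_1(a/q)|\ll P/\psi(P)$ for some $\psi(P)\to\infty$ arbitrarily slowly. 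Your ``secondary Weyl differencing using a Dirichlet approximation to $\mu_1$'' produces a saving that depends on the irrationality measure of $\mu_1$ and is therefore not uniform in $P$. The sharpest input actually available here is Freeman's bound (Lemma~\ref{Freeman} in the paper), which delivers only $|g_1(\alpha)g_2(\alpha)|\ll P^2/T(P)$ on $P^{\xi-3}\le|\alpha|\le T(P)$ for some $T(P)\to\infty$; there is no power saving to feed into your H\"older step.

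Because the sup-norm saving is merely $o(1)$, Hua's lemma $\int_0^1|g_i|^8\,d\alpha\ll P^{5+\eps}$ is fatal: the minor-arc contribution becomes $o(P^{6+\eps})$ rather than $o(P^6)$, and the same $\eps$ defeats your trivial-arc estimate with a fixed constant~$T$. The paper resolves this with three devices absent from your plan: (i) the trivial-arc cutoff is $T(P)\to\infty$, not a constant; (ii) two of the nine variables are placed in a diminishing range $(P^{5/6},2P^{5/6}]$, yielding a fourth-moment estimate $\ll P^{11/6+\eps}$ (Lemma~\ref{fourth1}) that handles the region $\fn=\{|g_i|\le P^{3/4+\eps}\}$ inside $\fm\cup\ft$; and (iii) on the complementary set $\fN=\{|g_i|>P^{3/4+\eps}\}$ an $\eps$-free mean value $\int_\fU|g_i|^u\,d\alpha\ll P^{u-3}$ for $u>6$ is established via Baker's simultaneous-approximation theorem (Lemma~\ref{seventh}). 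It is the combination of (ii) and (iii), together with Freeman's $o(1)$ saving, that makes nine variables suffice---not a pointwise Weyl bound on $g_1$.
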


If one is only interested in showing that $F(\bZ^s)$ is dense on the reals, then six variables suffice.

\begin{thm} \label{six} 
Let $s \ge 6$. Then $F(\bZ^s)$ is dense on $\bR$.
\end{thm}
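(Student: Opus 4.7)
The plan is to apply a Davenport--Heilbronn-style circle method to the shifted cubic form $F$, in a setting where the irrationality of $\mu_1$ plays the role classically played by an irrational ratio of leading coefficients. I first reduce to $s = 6$: for $s > 6$, fixing $x_7, \ldots, x_s \in \bZ$ merely translates $F$ by a constant, so density of $F(\bZ^6)$ in $\bR$ yields density of $F(\bZ^s)$. The essential new features compared with Theorem~\ref{Waring} are that the integers $x_i$ may be of either sign (so negative shifted cubes are available) and that the target $\tau$ may be arbitrary rather than large.

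Given $\tau \in \bR$ and $\eta > 0$, fix a large parameter $X$ and set
\[
f_i(\alpha) = \sum_{|x| \le X} e\bigl(\alpha (x - \mu_i)^3\bigr),
\]
where the sum now runs over all integers in the symmetric window, allowing cancellation. Choose a non-negative kernel $K$ supported in $(-\eta, \eta)$ with $\widehat K \ge 0$ and $\widehat K(0) > 0$. Fourier inversion gives
\[
\sum_{\bx \in \bZ^6 \cap [-X,X]^6} K\bigl(F(\bx) - \tau\bigr) = \int_\bR \prod_{i=1}^6 f_i(\alpha)\, \widehat K(\alpha)\, e(-\alpha \tau) \d\alpha,
\]
so it suffices to make the right-hand integral positive for some $X$. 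I would decompose $\bR = \fM \cup \fm \cup \ft$, with $\fM$ a short interval around $\alpha = 0$, $\fm$ the bulk of $\bR$, and $\ft$ the far tail. On $\fM$ a standard singular-integral computation yields a main term of order $X^3$, positive for every $\tau \in \bR$ because the inclusion of negative cubes removes any archimedean obstruction. On $\ft$ the decay of $\widehat K$ makes the contribution negligible.

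The main obstacle will be the minor arc estimate at $s = 6$, which is tighter than the analogous bound needed at $s = 9$ in Theorem~\ref{Waring}. The crucial saving is powered by the irrationality of $\mu_1$: expanding $(x - \mu_1)^3 = x^3 - 3\mu_1 x^2 + 3\mu_1^2 x - \mu_1^3$, the Weyl sum $f_1$ admits genuine cancellation not only on classical minor arcs but also near any nonzero rational $a/q$, because the quadratic phase $-3\mu_1\alpha x^2$ then has an essentially irrational coefficient $-3\mu_1 a/q$. Consequently one expects $|f_1(\alpha)| \ll X^{1-\delta}$ uniformly on $\fm$. Combining this with Hua-type $L^6$ moment bounds on $|f_2|, \ldots, |f_6|$ and a Davenport--Heilbronn-style measure estimate on the exceptional set where $\mu_1$ is badly approximable should yield $\int_\fm |\prod_i f_i\, \widehat K|\d\alpha = o(X^3)$, forcing the main term to dominate and producing the required $\bx$.
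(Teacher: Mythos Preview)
Your circle-method plan has a genuine gap at the minor arc estimate. The assertion that the irrationality of $\mu_1$ yields a uniform power saving $|f_1(\alpha)| \ll X^{1-\delta}$ on $\fm$ is not correct. Near a nonzero rational $a/q$ the quadratic coefficient $-3\mu_1\alpha$ is indeed irrational, but the resulting Weyl-sum cancellation depends on the Diophantine type of $\mu_1$; no uniform $\delta>0$ exists without imposing extra hypotheses on $\mu_1$. What is actually available (cf.\ Lemma~\ref{Freeman}) is only a product bound $|g_1(\alpha)g_2(\alpha)| = o(P^2)$ on the intermediate range, i.e.\ an $o(1)$ saving rather than a power saving. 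With merely an $o(1)$ gain on one or two factors, the remaining minor arc integral at $s=6$ demands an estimate of the shape $\int |g|^6 K(\alpha)\,\d\alpha \ll P^{3}$ (no $\eps$). This is the conjectured, but unproven, sixth-moment bound for cubic Weyl sums; the best known bounds exceed $P^{3}$ by a positive power. That is precisely why the circle-method route in this paper reaches only $s\ge 9$ in Theorem~\ref{Waring}, and would not close at $s=6$ even granting all the standard Hua-type inputs.

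The paper's proof of Theorem~\ref{six} avoids the circle method entirely. It uses Linnik's substitution, pairing variables as $x_1=a+y_1$, $x_4=-y_1$, $x_2=y_2$, $x_5=-y_2$, $x_3=y_3$, $x_6=-y_3$, which collapses $F(\bx)$ to a quadratic polynomial in $(y_1,y_2,y_3)$ whose homogeneous part is diagonal, nondegenerate and indefinite. A short case analysis (choosing $a\in\{3,4\}$) ensures the resulting quadratic polynomial satisfies the irrationality condition of Definition~\ref{GenIrr}. Density of $F(\bZ^6)$ then follows from the Margulis--Mohammadi theorem on inhomogeneous indefinite ternary quadratic forms. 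The analytic input here is a deep result from homogeneous dynamics, not a mean-value estimate, and this is what allows six variables rather than nine.
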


A heuristic application of the Davenport-Heilbronn circle method suggests that the conclusion of Theorem \ref{Waring} is valid whenever $s \ge 4$. Combining our ideas with those of Parsell and Wooley \cite{PW2013}, we establish a full density result for $s \ge 5$. For real numbers $A < B$, let $Z(A,B)$ denote the set of $\tau \in [A,B]$ such that \eqref{main} has no solution $\bx \in \bZ^s$ such that $x_i > \mu_i$ for all $i$. Let $N$ be a large positive real number, and put $Z(N) = Z(0,N)$.

\begin{thm} \label{FullDensity} 
Let $s \ge 5$. Then
\[ \meas(Z(N)) = o(N). \]
\end{thm}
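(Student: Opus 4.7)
The plan is to combine the Davenport--Heilbronn method used for Theorem \ref{Waring} with a variance (second-moment) argument in the spirit of Parsell and Wooley \cite{PW2013}. For positive $\tau$ set $P=(\tau/s)^{1/3}$, let $K_\eta$ denote a standard Davenport--Heilbronn kernel (nonnegative, concentrated on $|\alpha|\ll\eta^{-1}$, with Fourier transform approximating the indicator of $(-\eta,\eta)$), and for $1\le i\le s$ let
\[ f_i(\alpha) = \sum_{\mu_i<n\le P+\mu_i} e\bigl(\alpha(n-\mu_i)^3\bigr). \]
The counting integral
\[ R(\tau) = \int_\bR f_1(\alpha)\cdots f_s(\alpha)\,K_\eta(\alpha)\,e(-\alpha\tau)\,d\alpha \]
is a nonnegative weighted count of solutions to \eqref{main} with $x_i>\mu_i$, so $R(\tau)>0$ implies $\tau\notin Z(N)$. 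I would dissect $\bR=\mathfrak{M}\cupp\mathfrak{m}\cupp\mathfrak{t}$ in the usual manner, with $\mathfrak{M}$ a small neighbourhood of $0$ and $\mathfrak{t}=\{|\alpha|\gg\eta^{-1}\}$.

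The major and trivial arc contributions can be handled as in the proof of Theorem \ref{Waring}: the singular integral evaluation gives $R_\mathfrak{M}(\tau)=(c+o(1))\eta\tau^{s/3-1}$ for some $c>0$, uniformly for $\tau_0\le\tau\le N$, while $R_\mathfrak{t}(\tau)=o(\eta\tau^{s/3-1})$ by rapid decay of $K_\eta$. After a dyadic decomposition of $[0,N]$ it therefore suffices to prove that the exceptional set
\[ E(N) = \bigl\{\tau\in[N/2,N]:|R_\mathfrak{m}(\tau)|\ge\tfrac{c}{2}\eta\tau^{s/3-1}\bigr\} \]
has measure $o(N)$.

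The key step, and the main obstacle, is a variance bound for $R_\mathfrak{m}$. Since $R_\mathfrak{m}(\tau)$ is the Fourier transform of the compactly supported function $\prod_i f_i\cdot K_\eta\cdot\mathbf{1}_\mathfrak{m}$, Plancherel's theorem gives
\[ \int_\bR |R_\mathfrak{m}(\tau)|^2\,d\tau = \int_\mathfrak{m} |f_1(\alpha)\cdots f_s(\alpha)|^2\,|K_\eta(\alpha)|^2\,d\alpha. \]
I would estimate the right-hand side via the AM--GM step $|f_1\cdots f_s|^2\le s^{-1}\sum_i|f_i|^{2s}$, splitting each $|f_i|^{2s}=|f_i|^{2s-8}\cdot|f_i|^8$ and combining a pointwise Weyl-type minor-arc bound $\sup_{\alpha\in\mathfrak{m}}|f_i(\alpha)|\ll P^{3/4+\eps}$ with the cubic eighth-moment estimate $\int_{|\alpha|\ll\eta^{-1}}|f_i(\alpha)|^8\,d\alpha\ll\eta^{-1}P^{5+\eps}$ (Vaughan's $[0,1]$ bound extended by a unit-interval cover). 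With $|K_\eta|^2\ll\eta^2$ on its support, this delivers $\int_\bR|R_\mathfrak{m}|^2\,d\tau\ll\eta P^{(3s-2)/2+\eps}$, and Chebyshev's inequality then yields
\[ \meas E(N)\ll\frac{\eta P^{(3s-2)/2+\eps}}{(\eta N^{s/3-1})^2}\ll\eta^{-1}N^{(10-s)/6+\eps}, \]
which is $o(N)$ precisely when $s\ge 5$.

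I expect the principal technical difficulty to lie in adapting these minor-arc tools to the shifted cubic polynomials $(n-\mu_i)^3$: Weyl differencing and Vaughan's eighth-moment estimate both apply in principle but must be re-derived for exponentials whose phase is not a polynomial in $n$ with integer coefficients. Moreover, the extension of the eighth-moment bound from $[0,1]$ to $|\alpha|\ll\eta^{-1}$ requires separate justification, since $e(\alpha(n-\mu_i)^3)$ is not periodic of period $1$ in $\alpha$ when $\mu_i$ is irrational; this is typically handled by a triangle-inequality step noting that on each unit interval $[k,k+1]$ the shift $\alpha\mapsto\alpha+k$ multiplies the summand by the unit-modulus phase $e(k(n-\mu_i)^3)$.
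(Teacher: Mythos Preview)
Your variance framework is sound and matches the paper's overall strategy, but the key input you invoke --- a pointwise Weyl bound $\sup_{\alpha\in\fm}|f_i(\alpha)|\ll P^{3/4+\eps}$ on the Davenport--Heilbronn minor arc --- is false. Your $\fm$ contains every rational with small denominator in $(P^{\xi-3},\eta^{-1})$; in particular it contains $\alpha=1$. The hypotheses of Theorem~\ref{FullDensity} only require $\mu_1$ to be irrational, so one may have $\mu_2=0$, whence $f_2(1)=\sum_{0<n\le P}e(n^3)\asymp P$. Even when all $\mu_i$ are irrational, no uniform power saving on $\fm$ is known: the only available tool is Freeman's Lemma~\ref{Freeman}, which gives merely $\sup_\fm|f_1(\alpha)f_2(\alpha)|=o(P^2)$, and only on the range $|\alpha|\le T(P)$ with $T(P)\to\infty$ arbitrarily slowly. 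With only this saving, your splitting $|f_i|^{2s}=|f_i|^{2s-8}\cdot|f_i|^8$ plus Hua loses far too much. Relatedly, your pointwise trivial-arc bound $R_\ft(\tau)=o(\eta\tau^{s/3-1})$ fails for $s=5$: with $\ft=\{|\alpha|\gg\eta^{-1}\}$ one has only $\int_\ft|\prod f_i|K\,\d\alpha\ll P^5$, which dwarfs the main term $\asymp P^2$.

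The paper repairs both issues simultaneously. First, one variable is placed in a diminishing range $(P^{5/6},2P^{5/6}]$, so that the product $\tf(\alpha)$ has four full-range and one short-range factor. Second, $\fm\cup\ft$ is further dissected into a classical minor arc $\fn=\{|g_i|\le P^{3/4+\eps}\}$ (where Lemma~\ref{fourth1} supplies a fourth-moment bound $\ll P^{11/6+\eps}$ exploiting the short variable) and its complement $\fN$ (where Lemma~\ref{seventh} gives an $\eps$-free seventh moment). Freeman's lemma then handles $\fm\cap\fN$, and the kernel decay handles $\ft\cap\fN$. All of this is packaged as inequality~\eqref{rest1} from \S\ref{War}; one extra trivial factor of $P$ converts it into the required mean-square bound $\int_{\fm\cup\ft}|\tf|^2K\,\d\alpha=o(P^{20/3})$. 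The variance/Chebyshev step you describe (equivalent to the paper's $H(\alpha)$ argument) then finishes the proof.
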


Our proof is easily adapted to show that for all real numbers $\tau \in [-N,N]$, save for those lying in a set of measure $o(N)$, there exists $\bx \in \bZ^s$ satisfying \eqref{main}. The result of Theorem \ref{FullDensity} cannot be obtained for $s =3$. 

\begin{thm} \label{LowerDensity}
Assume that $s=3$ and $\eta < 1/4$. Then
\[ \meas(Z(N)) > N/2. \]
\end{thm}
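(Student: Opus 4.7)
The plan is a direct volume/counting argument, exploiting the fact that three positive cubes can only cover a constant fraction of $[0,N]$, so for $\eta$ small the $\eta$-neighbourhoods of the values $F(\bx)$ cover strictly less than half of $[0,N]$. First I would cover the complement of $Z(N)$: each $\tau \in [0,N] \setminus Z(N)$ admits a witness $\bx \in \bZ^3$ with $x_i > \mu_i$ and $|F(\bx)-\tau|<\eta$, so $\tau$ lies in an interval of measure at most $2\eta$ centred at $F(\bx)$. Since $y_i := x_i - \mu_i > 0$ forces $F(\bx) > 0$, only witnesses with $F(\bx) < N+\eta$ contribute. Writing $\cN$ for the number of such $\bx$, I obtain
\[
\meas([0,N] \setminus Z(N)) \le 2\eta \cdot \cN.
\]

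Next I would bound $\cN$ coordinate-wise. For any $\bx$ counted, $y_i^3 \le F(\bx) < N + \eta$, so $x_i \in (\mu_i,\, \mu_i + (N+\eta)^{1/3}]$; each coordinate admits at most $(N+\eta)^{1/3} + 1$ integer values, and thus
\[
\cN \le \bigl((N+\eta)^{1/3} + 1\bigr)^3 = N + O(N^{2/3}).
\]
Combining gives $\meas(Z(N)) \ge (1-2\eta)N - O(N^{2/3})$. Since $\eta < 1/4$, the coefficient $1 - 2\eta$ is strictly greater than $1/2$, and so $\meas(Z(N)) > N/2$ for $N$ sufficiently large.

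There is no genuine obstacle here; the result is essentially a pigeonhole inequality, and the condition $\eta < 1/4$ enters precisely to make $1-2\eta > 1/2$. The crude coordinate-wise bound $\cN \le ((N+\eta)^{1/3}+1)^3$ suffices: in fact the region $\{\by \in \bR^3 : y_i \ge 0,\ y_1^3 + y_2^3 + y_3^3 \le R\}$ has the smaller Dirichlet volume $\Gamma(1/3)^3 R / 27$, so the argument would continue to work for any $\eta < 27/(4\Gamma(1/3)^3) \approx 0.351$, but this refinement is unnecessary. Notably, the irrationality of $\mu_1$ plays no r\^ole in the argument.
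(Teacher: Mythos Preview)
Your proof is correct and follows essentially the same approach as the paper: cover the complement of $Z(N)$ by intervals of length $2\eta$ indexed by witnesses $\bx$, bound the number of witnesses, and conclude. The only cosmetic difference is that the paper bounds the number of witnesses by the Dirichlet volume $\Gamma(4/3)^3(N+O(N^{2/3}))$ via a packing argument (placing a unit cube at each witness inside the region $\{\bgam\in\bR_{>0}^3:\gam_1^3+\gam_2^3+\gam_3^3<N+10N^{2/3}\}$), whereas you use the cruder bounding-box count $((N+\eta)^{1/3}+1)^3$; you already note this refinement yourself, and either bound suffices for $\eta<1/4$.
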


Given eleven variables we can obtain an asymptotic formula for the number of `positive' solutions to \eqref{main}. When $\tau > 0$ is large, denote by $N(\tau) = N_{s,\eta,\boldsymbol{\mu}}(\tau)$ the number of integral solutions $\bx \in (\mu_1,\infty)\times \ldots \times (\mu_s,\infty)$ to \eqref{main}. 

\begin{thm} \label{AsymptoticFormula}
Let $s \ge 11$. Then
\[ N(\tau) \sim  2 \eta \Gamma(4/3)^s \Gamma(s/3)^{-1}\tau^{s/3-1}. \]
\end{thm}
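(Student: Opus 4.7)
The plan is to execute the Davenport--Heilbronn circle method in its asymptotic form. Set $P = \tau^{1/3}$, and for each $1 \le i \le s$ define
\[ f_i(\alp) = \sum_{\substack{x \in \bZ \\ \mu_i < x \le \mu_i + P + 1}} e(\alp (x - \mu_i)^3) \]
together with the Fourier kernel $K(\alp) = \sin(2 \pi \eta \alp) / (\pi \alp)$, the Fourier transform of the indicator of $[-\eta, \eta]$. Since any $\bx$ counted by $N(\tau)$ satisfies $x_i - \mu_i < (\tau + \eta)^{1/3}$, Fourier inversion yields the exact identity
\[ N(\tau) = \int_{\bR} K(\alp) e(-\alp \tau) \prod_{i=1}^s f_i(\alp) \, d \alp. \]
I would dissect $\bR$ into a major arc $\fM = [-X / P^3, X / P^3]$, a minor arc $\fm = \{X / P^3 < |\alp| \le T\}$, and a trivial arc $\ft = \{|\alp| > T\}$, with $X$ growing slowly (say $X = \log \tau$) and $T$ a suitable power of $\tau$.

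On $\fM$ an Euler--Maclaurin comparison gives $f_i(\alp) = v(\alp) + O(1 + |\alp| P^2)$, where $v(\alp) = \int_0^P e(\alp u^3) \, du$; crucially the shifts $\mu_i$ drop out at leading order. After rescaling $\bet = \alp \tau$ and writing $V(\bet) = \int_0^1 e(\bet w^3) \, dw$, the major-arc contribution becomes
\[ 2 \eta \tau^{s/3 - 1} \int_{|\bet| \le X} V(\bet)^s e(-\bet) \, d\bet + o(\tau^{s/3 - 1}). \]
Since $s \ge 11$ and $V(\bet) \ll |\bet|^{-1/3}$, the integrand is absolutely integrable on $\bR$, and I may extend the range at negligible cost. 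The classical identity
\[ \int_{\bR} V(\bet)^s e(-\bet) \, d\bet = \Gamma(4/3)^s \Gamma(s/3)^{-1}, \]
proved, for instance, in Chapter~2 of Vaughan's book on Waring's problem, then supplies the asserted main term.

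For $\fm$, I couple Weyl's inequality with a mean value estimate. Weyl applied to the polynomial phase $\alp (x - \mu_i)^3$ yields $|f_i(\alp)| \ll P^{1 - \sig + \eps}$ for some $\sig > 0$ on the set where $\alp$ admits a Dirichlet approximation with $P^\eps \le q \le P^{3 - \eps}$. This is combined with a near-equation mean value of Vaughan--Hua shape
\[ \int_{\bR} |f_i(\alp)|^8 |K(\alp)| \, d\alp \ll \eta P^{5 + \eps}, \]
wherein $\int_\bR |f_i|^8 |K|$ counts $8$-tuples satisfying $\bigl| \sum_{j \le 4}(x_j - \mu_i)^3 - \sum_{j \le 4} (y_j - \mu_i)^3 \bigr| \le \eta$. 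For $s = 11$ one pulls out three factors by the Weyl bound and controls the remaining eight by the mean value, yielding a minor-arc contribution of $O(P^{3(1 - \sig) + 5 + \eps}) = o(\tau^{s/3 - 1})$. The trivial arc is handled by $|K(\alp)| \le (\pi |\alp|)^{-1}$ together with the trivial bound $|f_i(\alp)| \ll P$, by choosing $T$ so that $P^s / T = o(\tau^{s / 3 - 1})$.

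The main obstacle is the minor-arc analysis, specifically adapting Hua--Vaughan type mean value estimates from the periodic setting of $[0, 1]$ to the non-periodic shifted sums integrated against $K$ over all of $\bR$. Because $(x - \mu_i)^3$ is not a $\bZ$-periodic function of $x$, the classical bounds cannot be invoked directly; instead $\fm$ must be dissected via Dirichlet approximations, and the mean value reinterpreted as a count of near-solutions rather than exact equalities. With $s = 11$ the cubic Weyl saving $(1 - \sig)^3$ affords just enough slack to close the argument.
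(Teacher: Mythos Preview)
Your overall Davenport--Heilbronn architecture is right, but two steps fail as written.

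First, the Dirichlet kernel $K(\alp)=\sin(2\pi\eta\alp)/(\pi\alp)$ is not in $L^1(\bR)$. Your trivial-arc estimate $\int_{|\alp|>T} P^s|K(\alp)|\,d\alp$ therefore diverges, and so does the eighth-moment integral $\int_\bR |f_i|^8|K|\,d\alp$ once you take absolute values (which you must, in order to pull out a supremum on $\fm$). The paper instead uses Freeman's sandwiching kernels $K_\pm$, which satisfy $K_\pm(\alp)\ll L(P)|\alp|^{-2}$ with $L(P)\to\infty$ slowly; the extra factor $L(P)$ is then absorbed by the minor-arc saving.

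Second, and more fundamentally, Weyl's inequality does not give a power saving throughout your minor arc. Weyl differencing of $\alp(x-\mu_i)^3$ depends only on the leading coefficient $\alp$, so the shifts contribute nothing to the Weyl bound; hence near any rational $a/q$ with $q<P^\eps$ --- for instance at the edge $|\alp|\sim X/P^3$, where the best approximation is $0/1$ --- the saving is only of order $(q+P^3|q\alp-a|)^{-1/4}$, which is not a power of $P$. Your plan of extracting three Weyl factors therefore does not close on this part of $\fm$, and you never say what replaces it. The paper handles this by a secondary dissection into $\fN=\{|g_3|>P^{3/4+\eps}\}$ and its complement, combining an $\eps$-free moment bound on $\fN$ (Lemma~\ref{seventh}, via Baker's simultaneous-approximation lemma) with Freeman's input (Lemma~\ref{Freeman}) that the irrationality of $\mu_1$ forces $\sup_\fm |g_1(\alp)g_2(\alp)|=o(P^2)$. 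This last saving is only $o(1)$, not a power of $P$, and it comes from the irrational lower-order coefficients of $(x-\mu_1)^3$, not from Weyl differencing. Your proposal never invokes the hypothesis $\mu_1\notin\bQ$; yet without it the theorem is false, since for $\mu_1=\ldots=\mu_s=0$ one has $N(\tau)=0$ whenever the distance from $\tau$ to the nearest integer exceeds $\eta$.
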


By a simplification of our methods, we may obtain a similar asymptotic formula for sums of five shifted squares. We can also handle sums of nine univariate cubic polynomials, subject to an irrationality condition, improving on the thirteen variable result apparent from Freeman's work \cite{Fre2003}. We take the following definition from \cite{Fre2003}.

\begin{defn} \label{irr} Let $k \ge 2$ be an integer. For $i=1,2,\ldots,s$, let $h_i(x)$ be a degree $k$ polynomial with real coefficients given by
\[ h_i(x) = \beta_{ik}x^k+ \ldots + \beta_{i1}x + \beta_{i0}. \]
The polynomials $h_1, \ldots, h_s$ satisfy the \emph{irrationality condition} if there exist $i_1, i_2 \in \{1,2,\ldots,s \}$ and $j_1,j_2 \in \{1,2,\ldots,k\}$ such that $\beta_{i_2j_2} \ne 0$ and $\beta_{i_1j_1} / \beta_{i_2j_2}$ is irrational.
\end{defn}

\begin{thm} \label{generalisation} Let $s \ge 9$, let $\tau$ be a real number, let $h_1, \ldots, h_s \in \bR[x]$ be cubic polynomials satisfying the irrationality condition, and put $H(\bx) = \sum_{i \le s} h_i(x_i)$. Then there exists $\bx \in \bZ^s$ such that
\[ |H(\bx) - \tau | < \eta. \]
\end{thm}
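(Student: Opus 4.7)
The strategy is to complete the cube in each polynomial, reducing the problem to a shifted-cube sum with an additional linear perturbation, and then to mount the Davenport--Heilbronn circle method as in the proof of Theorem \ref{Waring}. With $\mu_i = -\beta_{i2}/(3\beta_{i3})$, $c_i = \beta_{i1} - \beta_{i2}^2/(3\beta_{i3})$, and $d_i$ the resulting constant term, each cubic decomposes as
\[ h_i(x) = \beta_{i3}(x - \mu_i)^3 + c_i x + d_i. \]
Absorbing $\sum_i d_i$ into $\tau$, the problem reduces to finding integers $x_i$ satisfying
\[ \Bigl| \sum_{i=1}^{s} \beta_{i3}(x_i - \mu_i)^3 + \sum_{i=1}^{s} c_i x_i - \tau' \Bigr| < \eta \]
for some adjusted target $\tau'$.

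Next I would exploit the irrationality hypothesis. After rescaling the entire polynomial so that $\beta_{i_2 j_2} = 1$, some coefficient $\beta_{i_1 j_1}$ with $j_1 \in \{1,2,3\}$ is irrational. Depending on $j_1$, this manifests in the decomposed form either as an irrational leading coefficient $\beta_{i_1 3}$, as an irrational shift $\mu_{i_1}$, or as an irrational linear coefficient $c_{i_1}$. In the cubic-level case, density already follows from a pair of cubes with an irrational coefficient ratio via a Davenport--Heilbronn argument. In the quadratic-level case, one recovers precisely the structure of Theorem \ref{Waring}, up to non-unit leading coefficients and a linear perturbation. In the linear-level case the irrationality survives only in the linear perturbation; this is the residual subcase.

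The Davenport--Heilbronn analysis itself then proceeds via the exponential integral
\[ \int_{-\infty}^{\infty} \prod_{i=1}^{s} T_i(\alpha) K_{\eta}(\alpha) e(-\alpha \tau') \, d\alpha, \]
where $T_i(\alpha) = \sum_{x} w_i(x) e(\alpha h_i(x))$ for a smooth weight $w_i$ of support length $\asymp P$, and $K_{\eta}$ is a Davenport--Heilbronn kernel. The cubic Weyl inequality bounds $|T_i(\alpha)|$ in terms of the rational approximability of $\alpha\beta_{i3}$ alone; the perturbations $c_i x + d_i$ enter only the lower-order coefficients of the Weyl-differenced polynomial and hence do not affect this bound. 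Thus the minor-arc estimate from the proof of Theorem \ref{Waring} transfers essentially unchanged, the trivial-arc contribution is controlled by the decay of $K_{\eta}$, and the main arc at the origin produces the expected contribution of order $\asymp \eta P^{s-3}$.

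The principal obstacle is the analysis on the residual `rational' arcs near $\alpha = p/q$ with $q$ small, where all $T_i$ may simultaneously be large. Here one must exploit the irrationality to secure decay. When it lies in $\beta_{i_1 3}$ or $\mu_{i_1}$, the cancellation mechanism used in the proof of Theorem \ref{Waring} (arising at the cubic or quadratic level after Weyl-differencing) transfers directly. When it lies only in $c_{i_1}$, the shift-based cancellation must be replaced by the cancellation inherent in the linear exponential $\sum_x e(\alpha c_{i_1} x)$, controlled by $\|\alpha c_{i_1}\|^{-1}$; combining this with the cubic Weyl bounds for the other factors should yield the requisite decay on the residual arcs. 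Verifying this last step, and making sure nine variables provide enough slack for the pruning, is the technical heart of the argument, and I expect it to follow the Freeman--Wooley framework used for Theorem \ref{Waring} with the site of irrationality shifted one step down in the coefficient hierarchy.
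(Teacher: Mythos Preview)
The paper takes a different and more uniform route: it does not complete the cube or case-split on where the irrationality sits. It works directly with the Weyl sums $g_i(\alpha)=\sum_x e(\alpha h_i(x))$, placing seven variables in a long range $(P,cP]$ and two in a diminishing range $(P^{4/5},2P^{4/5}]$, and feeds in three inputs from \S\ref{prelim}: the fourth-moment estimate for general cubics with diminishing range (Lemma~\ref{fourth2}), the $\eps$-free moment on classical major arcs via Baker's simultaneous approximation (Lemma~\ref{seventh}), and Freeman's bound $\sup_{\fm}|g_1(\alpha)g_2(\alpha)|=o(P^2)$ on the Davenport--Heilbronn minor arcs (Lemma~\ref{Freeman}). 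The last of these absorbs the irrationality condition in a single stroke, with no case analysis; the rest of the argument is then literally the proof of Theorem~\ref{Waring} with Lemma~\ref{fourth2} in place of Lemma~\ref{fourth1}.

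Your linear-irrationality subcase has a genuine gap. The sum you must control is $\sum_x e(\alpha h_{i_1}(x))$, not a free-standing $\sum_x e(\alpha c_{i_1}x)$; the linear phase sits inside the cubic exponential and cannot be factored out to yield a bound of the shape $\|\alpha c_{i_1}\|^{-1}$. The actual mechanism, packaged here as Lemma~\ref{Freeman} and proved in \cite[Lemmas~8 and~9]{Fre2003}, is that if both $|g_1(\alpha)|$ and $|g_2(\alpha)|$ are large then (via \cite[Theorem~5.1]{Bak1986}) all six numbers $\alpha\beta_{ij}$ ($i\in\{1,2\}$, $j\in\{1,2,3\}$) admit simultaneous rational approximations with a common small denominator, which the irrationality condition eventually forbids once $|\alpha|$ is bounded away from~$0$. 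This is not recoverable from a bare $\|\alpha c_{i_1}\|^{-1}$ estimate. Separately, you put every variable in range $\asymp P$; without the diminishing range on two of them there is no fourth-moment estimate strong enough to make nine variables suffice, so your classical-minor-arc step would not close.
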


In order to assess the strength of Theorem \ref{Waring}, we consider what is known about Waring's problem. Linnik \cite{Lin1943} showed that any large positive integer can be expressed as a sum of at most seven positive cubes. Vaughan \cite{Vau1989jlms} later used smooth numbers to establish a lower bound, of the conjectured order of magnitude, for the number of representations. Both methods rely on arithmetic considerations which, due to the real shifts, are not useful in our problem. Consequently $s=8$ is a sensible target in Theorem 1, and our methods come agonisingly close to achieving this.

Cognoscenti will note that Vaughan also uses divisibility ideas to treat the eight variable case of Waring's problem in \cite{Vau1986Crelle}. However, if one does not seek an asymptotic formula, then the sixth moment estimate in Vaughan \cite{Vau1985}, which uses diminishing ranges, suffices to establish the existence of solutions. We will need to modify Vaughan's procedure for establishing low moment estimates, since divisibility cannot be used to study the underlying diophantine inequalities.

Theorem \ref{six} requires fewer variables than Waring's problem for cubes. Note, however, that any integer can be written as a sum of five integer cubes (see \cite[Theorem 405]{HW2008}). To prove Theorem \ref{six}, we use Linnik's idea \cite{Lin1943} to reduce to an indefinite, irrational ternary quadratic polynomial. We then invoke the work of Margulis and Mohammadi \cite{MM2011} on inhomogeneous quadratic polynomials.

Our overall strategy for proving Theorem \ref{Waring} is that of Freeman \cite{Fre2003}. We use the Davenport-Heilbronn method, with the treatment of the major arc being fairly standard. Next we perform a classical major and minor arc dissection using \cite[Theorem 5.1]{Bak1986}, which tells us that either a Weyl sum is small or its coefficients have good simultaneous rational approximations. Classical minor arcs are treated using a fourth moment estimate involving diminishing ranges. An $\eps$-free analogue to Hua's lemma is needed on classical major arcs, and a nontrivial bound is needed on Davenport-Heilbronn minor arcs. The former uses \cite[Lemma 4.4]{Bak1986}, while the latter is provided by \cite[Lemmas 8 and 9]{Fre2003}.

Theorem \ref{FullDensity} exemplifies the philosophy that if $2t$ variables suffice to solve an additive problem then $t$ variables suffice almost surely. We follow a recipe of Parsell and Wooley \cite{PW2013}. By considering the contributions from the Davenport-Heilbronn minor and trivial arcs in mean square, we may effectively work with $2t$ variables in this part of the analysis.

A remark made in the introduction of \cite{Fre2003} implies the conclusion of Theorem \ref{generalisation} whenever $s \ge 13$. Theorem \ref{generalisation} is obtained in the same way as Theorem \ref{Waring}. Our fourth moment estimate is slightly weaker in the general setting, but nonetheless permits a nine variable treatment. Following this same procedure, and then using the methods developed by Freeman \cite{Fre2002} and Wooley \cite{Woo2003}, yields Theorem \ref{AsymptoticFormula}. An asymptotic formula in fewer variables cannot be obtained via Lemma \ref{fourth1}, since the latter uses diminishing ranges. 

Margulis and Mohammadi \cite[Theorem 1.4]{MM2011} have shown that three variables suffice to give a version of Theorem \ref{generalisation} for quadratic polynomials. A simplification of our methods shows that $(k-1)2^{k-1}+3$ variables suffice for a degree $k$ diagonal analogue. In a similar vein we may obtain a degree $k \ge 2$ analogue to Theorem \ref{Waring} using $(k-1)2^{k-1}+3$ variables. In fact we can do much better; exponents $k \ge 4$ are vulnerable to a broader range of attacks, which we discuss in coming work. 

This paper is organised as follows. In \S \ref{prelim}, we establish the low moment estimates underpinning the proofs of our theorems, and also introduce work of Freeman which exploits the irrationality of $\mu_1$. In \S\S \ref{War}--\ref{sixvars} we prove Theorems \ref{Waring}, \ref{FullDensity}, \ref{LowerDensity}, \ref{AsymptoticFormula}, \ref{generalisation} and \ref{six} respectively.

We adopt the convention that $\eps$ denotes an arbitrarily small positive number, so its value may differ between instances. Bold face will be used for vectors, for instance we shall abbreviate $(x_1,\ldots,x_s)$ to $\bx$. We shall use the unnormalised sinc function, given by $\sinc(x) = \sin(x)/x$ for $x \in \bR \setminus \{0\}$ and $\sinc(0)=1$. We shall use $g(\alp)$ and $g_i(\alp)$ to denote Weyl sums, to be explicitly defined in each situation. 

The author thanks Trevor Wooley for suggesting this line of research, as well as for his dedicated supervision.

\section{Preliminary estimates}
\label{prelim}

Key inputs for this paper are sufficiently strong low moment estimates for Weyl sums. Let $P>0$ be a large real number. For real numbers $X>0, \alpha$ and $\mu$, write
\begin{equation} \label{fDefs}
f_j(\alpha, \mu, X) = \sum_{(j-1)X < x \le jX} e(\alpha(x-\mu)^3) \qquad (j=1,2).
\end{equation}
We note the identity
\begin{equation*}
f_2(\alp,\mu,X) = f_1(\alp,\mu,2X) - f_1(\alp,\mu,X),
\end{equation*}
from which we can deduce that $f_2$ inherits certain bounds from $f_1$. By considering the underlying diophantine equations, it is easy to see that
\[ \int_0^1 |f_j(\alpha,0,P)|^2 \d \alp \ll P \qquad (j=1,2) \]
and
\[
\int_0^1 |f_j(\alpha,0,P)|^4 \d \alp \ll P^{2+\eps} \qquad (j=1,2).
\]

We seek similar bounds for shifted cubes. First we introduce some notation, so as to delineate the relationship between moments of our Weyl sums and their associated diophantine inequalities. Put
\begin{equation} \label{Kdef}
K(\alpha) = K_\eta(\alp)= \eta \cdot \sinc^2(\pi \alp \eta).
\end{equation}
This kernel function was first used by Davenport and Heilbronn \cite{DH1946}. It satisfies
\begin{equation} \label{Kbound} 
0 \le K(\alp) \ll \min(1, |\alp|^{-2})
\end{equation} 
and, for any real number $t$,
\begin{equation}\label{orth1}
\int_\bR e(\alp t) K(\alp) \d \alp = \max(0, 1- |t/\eta|).
\end{equation}
Similarly
\begin{equation}\label{orth2}
4\int_\bR e(\alp t) K(2\alp) \d \alp = \max(0, 2- |t/\eta|).
\end{equation}
For $\kappa > 0$, we define the indicator function
\begin{equation} \label{ind}
U_\kappa(t)  = \begin{cases}
1, &\text{if } |t| < \kappa\\
0, &\text{if } |t| \ge \kappa.
\end{cases}
\end{equation}
By \eqref{orth1} and \eqref{orth2} we have
\begin{equation} \label{OrthBounds}
0 \le \int_\bR e(\alp t) K(\alp) \d \alp \le U_\eta(t) \le 4\int_\bR e(\alp t) K(2\alp) \d \alp \le 2 U_{2\eta}(t).
\end{equation}

\begin{lemma} \label{SecondMoment}
Let $h$ be a real polynomial of degree $d \ge 2$. Let $x$ and $y$ be integers such that $x,y > P$ and
\begin{equation*}
|h(x) - h(y)| < \eta.
\end{equation*}
Then $x=y$.
\end{lemma}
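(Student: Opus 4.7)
My plan is a direct application of the mean value theorem, exploiting the fact that $d \ge 2$ forces $h'$ to be a non-constant polynomial whose magnitude grows without bound.

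Write $h(t) = a_d t^d + a_{d-1} t^{d-1} + \ldots + a_0$ with $a_d \ne 0$. Suppose for contradiction that $x \ne y$; without loss of generality $x > y > P$, so $x - y \ge 1$. By the mean value theorem there exists $\xi \in (y,x)$ with
\[
h(x) - h(y) = (x-y) \, h'(\xi),
\]
and in particular $\xi > P$.

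Now $h'(t) = d a_d t^{d-1} + (d-1)a_{d-1} t^{d-2} + \ldots + a_1$ is a polynomial of degree $d - 1 \ge 1$ with nonzero leading coefficient $d a_d$. Consequently, once $P$ is large enough in terms of the coefficients of $h$, we have $|h'(\xi)| \ge \tfrac{1}{2} |d a_d| \xi^{d-1} \ge \tfrac{1}{2} |d a_d| P^{d-1}$ for every $\xi > P$. Combining this with $|x - y| \ge 1$ gives
\[
|h(x) - h(y)| \;=\; |x-y|\,|h'(\xi)| \;\ge\; \tfrac{1}{2} |d a_d| P^{d-1}.
\]
The right-hand side tends to infinity as $P \to \infty$ (here we use $d \ge 2$), so for $P$ sufficiently large in terms of $h$ and $\eta$ it exceeds $\eta$, contradicting the hypothesis $|h(x)-h(y)| < \eta$. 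Hence $x = y$.

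There is no real obstacle — the argument is a one-line mean value estimate. The only point of care is simply to record that "$P$ large" is quantified in terms of both the coefficients of $h$ and the parameter $\eta$; the hypothesis $d \ge 2$ is what makes $h'$ unbounded and hence makes this growth argument work (if $d = 1$ the derivative would be a constant and no amount of taking $P$ large would help).
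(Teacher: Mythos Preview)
Your proof is correct and follows essentially the same route as the paper: apply the mean value theorem to obtain $(h(x)-h(y))/(x-y) \gg P^{d-1}$, which forces $|x-y| < 1$ and hence $x=y$. The paper's version is simply a terser rendering of your argument.
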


\begin{proof} The mean value theorem gives
\[ (h(x)-h(y)) / (x-y) \gg P^{d-1}, \]
so $|x-y| < 1$.
\end{proof}

This implies, for instance, that if $\mu \in \bR$ then
\[ \int_\bR |f_2(\alp, \mu, P)|^2 K(\alp) \d \alp \ll P. \]
We need to work harder for a fourth moment estimate. We shall use diminishing ranges. If $q > 0$ and $a$ are integers, let $\fR(q,a)$ be the set of $\alp \in \bR$ such that $|q \alp - a| \le P^{-3/2}$. Let $\fR$ be the union of the arcs $\fR(q,a)$ over $q \le P$ and $(a,q)=1$, and put $\fr = \bR \setminus \fR$. Let $\cR$ be the intersection of $\fR$ and a unit interval.

\begin{lemma} \label{fourth1}
Let $\mu_1$ and $\mu_2$ be real numbers. Then the number $S_4$ of integral solutions to
\begin{equation} \label{fourtheq}
|(x_1 - \mu_1)^3 - (y_1 - \mu_1)^3 + (x_2 - \mu_2)^3  - (y_2- \mu_2)^3 |  < \eta
\end{equation}
with $P< x_1,y_1 \le 2P$ and $P^{5/6} < x_2,y_2 \le 2P^{5/6}$
satisfies $S_4 \ll P^{11/6+\eps}$.
\end{lemma}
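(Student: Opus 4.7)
The plan is to separate the trivial diagonal contribution from a genuinely off-diagonal one, then reduce the latter to counting integer points in thin tubes around a family of conics.

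Applying Lemma \ref{SecondMoment} separately to each cube difference in \eqref{fourtheq} shows that $x_1 = y_1$ forces $x_2 = y_2$, and vice versa. The diagonal $x_1 = y_1$, $x_2 = y_2$ thus contributes exactly $P \cdot P^{5/6} = P^{11/6}$ solutions. In the remaining off-diagonal case, $h_1 := x_1 - y_1$ and $h_2 := x_2 - y_2$ are both non-zero, and since the two cube differences must cancel they have opposite signs; after WLOG relabelling we may write $h_1 > 0$ and $h_2 = -k$ with $k > 0$. The sizes of the cube differences are $\asymp h_1 P^2$ and $\asymp k P^{5/3}$, whence $k \asymp h_1 P^{1/3}$, $h_1 \le P^{1/2}$, and there are $O(P^{4/3})$ admissible pairs $(h_1, k)$.

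For fixed $(h_1, k)$, completing the square in each of $y_1$ and $y_2$ and introducing the integer variables $A = x_1 + y_1$ and $B = x_2 + y_2$ converts \eqref{fourtheq} into
\[
\bigl|3 h_1 (A - 2\mu_1)^2 - 3 k (B - 2\mu_2)^2 - (k^3 - h_1^3)\bigr| < 4\eta,
\]
which cuts out a thin tube around a conic in the integer $(A, B)$-plane. The next step is to bound the number of integer $(A, B) \in [2P, 4P] \times [2P^{5/6}, 4P^{5/6}]$ inside this tube by $O(P^{1/2 + \eps})$, via a Bombieri--Pila-style argument for integer points near a real-coefficient algebraic curve of degree two, uniform in the coefficients $(h_1, k)$ and the real shifts $\mu_1, \mu_2$. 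Summing over the $O(P^{4/3})$ admissible pairs then yields an off-diagonal contribution of $O(P^{11/6 + \eps})$, and combining with the diagonal gives $S_4 \ll P^{11/6 + \eps}$.

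The main obstacle is obtaining this conic bound uniformly in $(h_1, k, \mu_1, \mu_2)$. In Vaughan's unshifted diminishing-ranges argument for Waring's problem, the analogous conic has integer coefficients and one recovers the far stronger $O(P^\eps)$ bound per pair by a divisor-function argument exploiting the divisibility $h_1 \mid k \cdot (\text{quadratic in } y_2)$. The paper emphasises that this divisibility route is unavailable for shifted cubes, since $(x - \mu)^3$ is not integer-valued; one must therefore fall back on a Bombieri--Pila-type inequality for integer points near real-coefficient conics, of precisely the flavour provided by \cite[Lemma 4.4]{Bak1986}.
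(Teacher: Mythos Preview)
Your reduction to counting integer points near a one-parameter family of real conics is a genuinely different route from the paper's, which instead writes $S_4'$ as $\int_\bR G(\alp)|f_2(\alp,\mu_2,P^{5/6})|^2 K(2\alp)\,\d\alp$ with $G(\alp)=\sum_{0<h\le 3P^{1/2}}\Phi_h(\alp)$ obtained by Weyl differencing in the long variables, proves the pointwise bound $G(\alp)\ll P^\eps(P^{3/2}q^{-1/2}+P+P^{1/4}q^{1/2})$, and then splits into classical major and minor arcs. No pointwise conic count ever appears.

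The gap in your argument is the uniform bound $O(P^{1/2+\eps})$ for integer $(A,B)\in[2P,4P]\times[2P^{5/6},4P^{5/6}]$ satisfying $|3h_1(A-2\mu_1)^2-3k(B-2\mu_2)^2-(k^3-h_1^3)|<4\eta$. This is exactly the hard step, and you have not supplied it: \cite[Lemma 4.4]{Bak1986} is an approximation of a Weyl sum by $q^{-1}S(q,\bv)I(\bbeta)$ on major arcs, not a Bombieri--Pila-type inequality, so your citation does not furnish the tool you need. The standard Bombieri--Pila theorem counts integer points \emph{on} an irreducible curve of degree $d$ in $[0,N]^2$, giving $O_{d,\eps}(N^{1/d+\eps})$; here you need points \emph{near} a real-coefficient conic, uniformly over $\asymp P^{4/3}$ choices of $(h_1,k)$ and with an anisotropic box. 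That uniformity is precisely what fails for the naive divisor argument, as you note, and replacing it by a points-near-curves estimate of the required strength would itself require a substantial argument (second-derivative/van der Corput or determinant-method input), not a single reference. Until that estimate is actually proved, the off-diagonal bound $O(P^{11/6+\eps})$ is unsupported.
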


\begin{proof} We imitate Vaughan \cite{Vau1985}. By Lemma \ref{SecondMoment}, with $P^{5/6}$ in place of $P$, the number of solutions counted by $S_4$ with $x_1 = y_1$ is $O(P^{11/6})$. It therefore suffices to show that $S'_4 \ll P^{11/6+\eps}$, where $S'_4$ is the number of solutions counted by $S_4$ with $x_1 > y_1$. Write $y_1=x$ and $x_1 = x+ h$. The mean value theorem gives
\[
|(x_1 - \mu_1)^3 - (y_1 - \mu_1)^3| > (3 - \eps) P^2 |x_1 - y_1| = (3 - \eps) hP^2.
\]
By combining this with the inequalities \eqref{fourtheq} and
\[
|(x_2 - \mu_2)^3  - (y_2- \mu_2)^3| < (8 + \eps) P^{5/2},
\]
we deduce that $0 < h \le 3 P^{1/2}$. For integers $h$ and real numbers $\alp$, define
\[
\Phi_h(\alp) = \sum_{P < x \le 2P} e(\alp h(3x^2 + 3(h-2\mu_1)x + h^2 - 3 \mu_1 h + 3 \mu_1^2))
\]
and
\[
G(\alp) = \sum_{0 < h \le 3P^{1/2}} \Phi_h(\alp).
\]
From \eqref{OrthBounds} we have
\begin{equation} \label{fourthstart}
S'_4 \ll \int_\bR G(\alp) |f_2(\alp, \mu_2, P^{5/6})|^2 K(2\alp) \d\alp.
\end{equation}

Let $q > 0$ and $a$ be relatively prime integers such that $|q \alp -a| \le q^{-1}$. Following closely the proof of the lemma in \cite{Vau1985}, we now show that
\begin{equation} \label{Gbound0}
G(\alp) \ll P^\eps (P^{3/2}q^{-1/2} + P + P^{1/4}q^{1/2}).
\end{equation}
By Cauchy's inequality,
\[ |G(\alp)|^2 \ll P^{1/2} \sum_{0< h \le 3P^{1/2}} |\Phi_h(\alp)|^2. \]
Moreover,
\[ |\Phi_h(\alp)|^2 = \sum_{P < x \le 2P} \sum_{P < y \le 2P} e(3 \alp  h (x-y) ( x+y + h - 2 \mu_1)). \]
On writing $x = y + h_1$ this becomes
\[
|\Phi_h(\alp)|^2 = \sum_{|h_1| < P} \sum_y e(3\alp h h_1 (2y + h + h_1 - 2 \mu_1)),
\]
where the inner summation is over
\[ \max(P, P-h_1) < y \le \min(2P, 2P-h_1). \]
Now
\[
|\Phi_h(\alp)|^2 \ll P + \sum_{0 < h_1 < P} \min(P, \|6 \alp h h_1 \|^{-1}). 
\]
Therefore
\[
G(\alp)^2 \ll P^2 + P^{1/2+\eps} \sum_{0 < u < 18 P^{3/2}} \min(P, \| \alp u \|^{-1}).
\]
Applying \cite[Lemma 2.2]{Vau1997} now gives
\[ 
G(\alp)^2 \ll P^\eps (q^{-1} P^3 + P^2 + q P^{1/2})
\]
when $q \le P^3$, and \eqref{Gbound0} follows.

Let $\alp \in \fr$. By Dirichlet's approximation theorem \cite[Lemma 2.1]{Vau1997}, we may choose relatively prime integers $q>0$ and $a$ such that $q \le P^{3/2}$ and $|q \alp -a| \le P^{-3/2}$. Since $\alp \in \fr$, we must also have $q>P$. Now \eqref{Gbound0} gives
\[
G(\alp) \ll P^{1+\eps}.
\]
Moreover, applying Lemma \ref{SecondMoment} with $2\eta$ in place of $\eta$, and recalling \eqref{OrthBounds}, yields
\[ \int_\bR
|f_2(\alp, \mu_2, P^{5/6})|^2 K(2\alp) \d\alp \ll P^{5/6}.
\]
Thus,
\begin{align*}
 \int_\fr G(\alp) |f_2(\alp, \mu_2, P^{5/6})|^2 K(2\alp) \d\alp 
& \ll (\sup_{\alp \in \fr} |G(\alp)|) \cdot P^{5/6}
\\ & \ll P^{11/6+\eps}.
\end{align*}

In light of \eqref{Kbound} and \eqref{fourthstart}, it now remains to show that
\begin{equation} \label{MajorFourth}
\int_\cR |G(\alp) f_2(\alp, \mu_2, P^{5/6})^2|  \d\alp \ll P^{11/6+\eps}.
\end{equation}
Let $\alp \in \fR(q,a) \subseteq \fR$. Now \eqref{Gbound0} gives $G(\alp) \ll P^{3/2+\eps}q^{-1/2}$ and, by Weyl's inequality \cite[Lemma 2.4]{Vau1997}, we have
\[ 
f_2(\alp, \mu_2, P^{5/6}) \ll P^{5/6+\eps} (P^{-5/6}+q^{-1})^{1/4}. 
\]

The measure of $\fR(q,a)$ is $2q^{-1}P^{-3/2}$. Moreover, if $q \in \bN$ then there are at most $q+1$ integers $a$ such that $|q\alp - a| \le P^{-3/2}$ for some $\alp \in\cR$. Hence 
\begin{equation} \label{MajorFourth0}
 \int_\cR |G(\alp) f_2(\alp, \mu_2, P^{5/6})^2|  \d\alp \ll I_1+I_2, 
\end{equation}
where 
\begin{equation} \label{I1Fourth}
I_1 = P^{3/2+\eps} \sum_{q \le P} q^{-1/2} P^{2(5/6-5/24)-3/2} \ll P^{7/4+\eps} \ll P^{11/6}
\end{equation}
and
\begin{equation}\label{I2Fourth}
 I_2 = P^{3/2+\eps} \sum_{q \le P} q^{-1} P^{5/3-3/2} \ll P^{5/3+2\eps} \ll P^{11/6}.
\end{equation}
Substituting \eqref{I1Fourth} and \eqref{I2Fourth} into \eqref{MajorFourth0} implies \eqref{MajorFourth}, completing the proof.
\end{proof}

We give a slightly weaker bound for general cubic polynomials. This suffices for Theorem \ref{generalisation}, so for simplicity we do not give the strongest possible result.

\begin{lemma} \label{fourth2} Let $h_1, h_2 \in \bR[x]$ be cubic polynomials, and fix a real number $c > 1$. Then the number $S_4$ of integral solutions to
\begin{equation} \label{fourth2eq}
|h_1(x_1) - h_1(y_1) + h_2(x_2) - h_2(y_2) | < \eta
\end{equation}
with $P < x_1, y_1 \le cP$ and $P^{4/5} < x_2, y_2 \le 2P^{4/5}$ satisfies $S_4 \ll P^{9/5+\eps}$.
\end{lemma}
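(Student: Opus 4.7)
The plan is to adapt the proof of Lemma \ref{fourth1}, using diminishing ranges of exponent $4/5$ rather than $5/6$ and accommodating the fact that $h_1, h_2$ are arbitrary cubic polynomials with respective nonzero leading coefficients $a_3, b_3$. The structural facts required---that $h_1'(y) \asymp P^2$ for $y \asymp P$, and that the second difference $(h_1(y+h+k) - h_1(y+k)) - (h_1(y+h) - h_1(y))$ is linear in $y$ with leading coefficient $6 a_3 hk$---both persist in this generality.

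First, by Lemma \ref{SecondMoment} applied with $P^{4/5}$ in place of $P$, any solution to \eqref{fourth2eq} with $x_1 = y_1$ forces $x_2 = y_2$, contributing $O(P \cdot P^{4/5}) = O(P^{9/5})$. For the remaining count $S_4'$ with $x_1 > y_1$, write $x_1 = y + h$ and $y_1 = y$; the mean value theorem combined with the crude bound $|h_2(x_2) - h_2(y_2)| \ll P^{12/5}$ forces $0 < h \ll P^{2/5}$. Define
\[
\Phi_h(\alp) = \sum_{P < y,\, y+h \le cP} e\bigl(\alp(h_1(y+h) - h_1(y))\bigr), \qquad G(\alp) = \sum_{0 < h \ll P^{2/5}} \Phi_h(\alp),
\]
and $f_2(\alp) = \sum_{P^{4/5} < x \le 2P^{4/5}} e(\alp h_2(x))$. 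By \eqref{OrthBounds},
\[
S_4' \ll \int_\bR G(\alp) |f_2(\alp)|^2 K(2\alp) \d\alp.
\]

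Squaring $\Phi_h$, applying Cauchy--Schwarz, and invoking \cite[Lemma 2.2]{Vau1997} on the sum over $u = hk \ll P^{7/5}$ (with the divisor function absorbed into $P^\eps$), one obtains, whenever $|q\alp - a| \le q^{-1}$ with $(a,q) = 1$,
\[
G(\alp) \ll P^\eps \bigl(P^{7/5} q^{-1/2} + P^{9/10} + P^{1/5} q^{1/2}\bigr).
\]
I take major arcs $\fR(q, a) = \{\alp : |q\alp - a| \le P^{-6/5}\}$ for $q \le P^{4/5}$ and $(a,q) = 1$, with $\fR$ their union and $\fr = \bR \setminus \fR$. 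On $\fr$, Dirichlet's theorem produces $q'$ with $P^{4/5} < q' \le P^{6/5}$, whence the displayed bound gives $G(\alp) \ll P^{1+\eps}$; combined with the second-moment bound $\int_\bR |f_2(\alp)|^2 K(2\alp) \d\alp \ll P^{4/5}$ following from Lemma \ref{SecondMoment}, the minor-arc contribution is $\ll P^{9/5 + \eps}$. On $\fR(q, a)$, Weyl's inequality yields $|f_2(\alp)|^2 \ll P^{8/5 + \eps} q^{-1/2}$ while the first term of the displayed bound gives $G(\alp) \ll P^{7/5 + \eps} q^{-1/2}$; integrating over the arc (of measure $2q^{-1} P^{-6/5}$), summing over the $O(q)$ admissible numerators per unit interval, and then over $q \le P^{4/5}$ contributes $\ll P^{9/5 + \eps}$.

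The main obstacle will be the precise calibration of the major-arc parameters $(Q, \beta) = (P^{4/5}, P^{-6/5})$, which is forced on one side by the appearance of $u = hk \ll P^{7/5}$ in the bound for $G$ and on the other by the Weyl exponent for $f_2$, with both sides needing to balance at exactly $P^{9/5+\eps}$. The nonzero leading coefficients $a_3, b_3$ enter only as harmless constant rescalings of $\alp$ in the relevant Dirichlet approximations and can be absorbed into the implied constants.
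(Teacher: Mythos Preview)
Your minor-arc treatment and the bound for $G(\alp)$ are fine (once one normalises $h_1$ to be monic, which you should state explicitly). The gap is on the major arcs. You assert that Weyl's inequality gives $|f_2(\alp)|^2 \ll P^{8/5+\eps} q^{-1/2}$ when $\alp \in \fR(q,a)$, and you dismiss the leading coefficients as ``harmless constant rescalings''. This is false in the generality of the lemma. Weyl's inequality for $f_2(\alp) = \sum e(\alp h_2(x))$ requires a rational approximation to the leading coefficient $b_3 \alp$, not to $\alp$ itself. If $b_3$ is irrational (which the hypotheses certainly allow), the condition $|q\alp - a| \le P^{-6/5}$ gives you no control whatsoever over the continued-fraction denominators of $b_3\alp$, and the claimed bound cannot be deduced. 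This is precisely why the paper's proof of Lemma~\ref{fourth2} departs from that of Lemma~\ref{fourth1}: in Lemma~\ref{fourth1} the short-range sum has leading phase $\alp x^3$, so the same $q$ works for both $G$ and the Weyl bound; here one can normalise at most one of $h_1$, $h_2$ to be monic.

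The paper avoids Weyl on $g$ altogether. It applies H\"older on the major arcs,
\[
\int_\fR |G(\alp) g(\alp)^2| K(2\alp) \d \alp \le J_1^{1/4} J_2^{3/4},
\]
and bounds $J_2 = \int |g|^{8/3} K$ trivially from the second moment, $J_2 \ll P^{4/3}$, which requires no arithmetic information about $h_2$. To make $J_1 = \int_\fR |G|^4 K$ converge, the crude bound $G \ll P^{7/5+\eps}q^{-1/2}$ is insufficient; instead the paper upgrades it via the transference principle to
\[
G(\alp) \ll q^{-1/2} P^{7/5+\eps} (1 + P^{12/5}|\alp - a/q|)^{-1/2},
\]
which integrates to $J_1 \ll P^{16/5+\eps}$. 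The product $J_1^{1/4} J_2^{3/4}$ then gives exactly $P^{9/5+\eps}$. Your major-arc argument can be repaired along these lines, but not by the route you sketched.
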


\begin{proof} We may assume without loss that $h_1$ is monic. By Lemma \ref{SecondMoment}, with $P^{4/5}$ in place of $P$, the number of solutions counted by $S_4$ with $x_1 = y_1$ is $O(P^{9/5})$. It therefore suffices to show that $S'_4 \ll P^{9/5+\eps}$, where $S'_4$ is the number of solutions counted by $S_4$ with $x_1 > y_1$. Write $y_1 = x$ and $x_1 = x+h$. Let $C$ be a large positive constant. The mean value theorem gives 
\[
|h_1(x_1) - h_1(y_1)| \gg P^2 |x_1 - y_1| = hP^2.
\]
By combining this with the inequalities \eqref{fourth2eq} and
\[
h_2(x_2)  - h_2(y_2) \ll P^{12/5},
\]
we deduce that $0 < h \le CP^{2/5}$. For integers $h$ and real numbers $\alp$, define
\[
\Phi_h(\alp) = \sum_{P < x \le cP} e(\alp h_1(x+h) - \alp h_1(x))
\]
and
\[
G(\alp) = \sum_{0 < h \le CP^{2/5}} \Phi_h(\alp).
\]
From \eqref{OrthBounds} we have
\begin{equation} \label{genFourthFinish}
S'_4 \ll \int_\bR G(\alp) |g(\alp)|^2 K(2\alp) \d\alp,
\end{equation}
where 
\[ g(\alp) = \sum_{P^{4/5} < x \le 2P^{4/5}} e(\alp h_2(x)). \]

Let $q > 0$ and $a$ be relatively prime integers such that $|q \alp - a| \le q^{-1}$. We now show that
\begin{equation} \label{Gbound}
G(\alp) \ll P^{1/5+\eps} \Bigl(        
\frac{P^{12/5}}{q+P^{12/5}|q \alp -a|} + P^{7/5} + q + P^{12/5}|q \alp - a|
\Bigr)^{1/2}.
\end{equation}
We initially follow the proof of the lemma in \cite{Vau1985}. We may plainly assume that $q \le P^3$. Put $M= P^{1/5}$, $H = P^{2/5}$ and $Q = P^{4/5}$. Let
\[ h_1(x) = x^3 + a_2x^2 + a_1x + a_0, \]
where $a_0, a_1, a_2 \in \bR$. Now
\[
\Phi_h(\alp) = \sum_{P < x \le cP} e(\alp h (3x^2 + 3hx + h^2  + a_2(2x +h) + a_1)).
\]

By Cauchy's inequality, we have
\begin{equation} \label{InitialCauchy}
|G(\alp)|^2 \ll H \sum_{0 < h \le CH} |\Phi_h(\alp)|^2.
\end{equation}
Moreover,
\[ |\Phi_h(\alp)|^2 = \sum_{P < x \le cP} \sum_{P < y \le cP} e(\alp  h (x-y) ( 3(x+y + h) +2a_2)). \]
On writing $x = y + d$ this becomes
\[
|\Phi_h(\alp)|^2 = \sum_{|d| < cP} \sum_y e(\alp h d (3(2y + h + d) +2 a_2)),
\]
where the inner summation is over
\[ \max(P, P- d) < y \le \min(cP, cP-d). \]
Thus,
\[
|\Phi_h(\alp)|^2 \ll P + \sum_{0 < d < cP} \min(P, \|6 \alp h d\|^{-1}),
\]
so
\[
\sum_{0< h \le CH} |\Phi_h(\alp)|^2 \ll HP + P^\eps \sum_{0 < u < 6CHcP} \min(P, \| \alp u \|^{-1}).
\]
Applying \cite[Lemma 2.2]{Vau1997} now gives
\begin{equation} \label{VaughanCompare}
\sum_{0 < h \le CH} |\Phi_h(\alp)|^2 \ll HP^{2+\eps}(q^{-1} + P^{-1} + q/(HP^2)).
\end{equation}

We may now apply the classical transference principle in \cite[\S 2.8, Exercise 2]{Vau1997} to deduce that
\begin{equation} \label{transference}
\sum_{0 < h \le CH}
|\Phi_h(\alp)|^2 \ll P^\eps \Bigl(        
\frac{P^2H}{q+Q^3|q \alp -a|} + PH + q + Q^3 |q \alp - a|
\Bigr).
\end{equation}
Here $q$ is replaced by $q + Q^3|q \alp - a|$, the latter being the `natural' height of $\alp$. For full details, see the proof of \cite[Lemma 3.1]{Vau1989acta}. The right hand side of \eqref{VaughanCompare} is precisely the right hand side of \cite[Equation (3.4)]{Vau1989acta}; there we put $r=q$, $b=a$ and $k=3$. 

Combining \eqref{transference} with \eqref{InitialCauchy} yields
\[ 
G(\alp) \ll H^{1/2}P^\eps \Bigl(        
\frac{P^2H}{q+Q^3|q \alp -a|} + PH + q + Q^3 |q \alp - a|
\Bigr)^{1/2},
\]
which establishes \eqref{Gbound}.

Let $\alp \in \fr$. By Dirichlet's approximation theorem, we may choose relatively prime integers $q>0$ and $a$ such that $q \le P^{3/2}$ and $|q \alp -a| \le P^{-3/2}$. Since $\alp \in \fr$, we must also have $q>P$. Now \eqref{Gbound} gives $G(\alp) \ll P^{19/20+\eps} \ll P$. Moreover, applying Lemma \ref{SecondMoment} with $2\eta$ in place of $\eta$, and recalling \eqref{OrthBounds}, yields
\begin{equation} \label{genSecond}
\int_\bR
|g(\alp)|^2 K(2\alp) \d\alp \ll P^{4/5}.
\end{equation}
Thus,
\begin{equation} \label{genFourthMinor}
 \int_\fr G(\alp) |g(\alp)|^2 K(2\alp) \d\alp \ll (\sup_{\alp \in \fr} |G(\alp)|) \cdot P^{4/5} \ll P^{9/5}.
\end{equation}

H\"older's inequality gives 
\begin{equation} \label{genFourthMajor0}
\int_\fR |G(\alp) g(\alp)^2| K(2\alp) \d\alp \le  J_1^{1/4} J_2^{3/4},
\end{equation}
where
\[ J_1 = \int_\fR |G(\alp)|^4 K(2\alp) \d \alp \]
and
\[ J_2 = \int_\fR |g(\alp)|^{8/3} K(2\alp)\d \alp. \]
From \eqref{genSecond} and a trivial estimate we have
\begin{equation} \label{genJ2}
J_2 \ll (P^{4/5})^{2/3}P^{4/5} = P^{4/3}.
\end{equation}

Note from \eqref{Gbound} that if $\alp \in \fR(q,a) \subseteq \fR$ then
\[ G(\alp) \ll q^{-1/2} P^{7/5+\eps} (1+P^{12/5}|\bet|)^{-1/2}, \]
where $\bet = \alp - a/q$. Moreover, if $q \in \bN$ then there are at most $q+1$ integers $a$ such that $|q\alp - a| \le P^{-3/2}$ for some $\alp \in\cR$. Hence 
\begin{align*}
\int_\cR |G(\alp)|^4 \d \alp 
&\ll \sum_{q \le P} q^{-1} \int_0^\infty P^{28/5+\eps} (1+P^{12/5}\bet)^{-2} \d \bet \\
&\ll P^{16/5+2\eps} 
\end{align*}
which, in light of \eqref{Kbound}, yields
\begin{equation} \label{genJ1}
J_1 \ll P^{16/5+\eps}.
\end{equation}
Substituting \eqref{genJ2} and \eqref{genJ1} into \eqref{genFourthMajor0} gives
\[
\int_\fR G(\alp) |g(\alp)|^2 K(2\alp) \d\alp \ll P^{9/5+\eps}.
\]
Combining this with \eqref{genFourthFinish} and \eqref{genFourthMinor} yields $S'_4 \ll P^{9/5+\eps}$, completing the proof.
\end{proof}

Lemmas \ref{fourth1} and \ref{fourth2} may be used for classical minor arcs, but we still require an epsilon-free analogue to Hua's lemma on classical major arcs. The key ingredient here is \cite[Theorem 5.1]{Bak1986}, which formalises the idea that if a Weyl sum is large then its coefficients have good simultaneous rational approximations. For $h \in \bR[x]$, and for real numbers $b \ge 0$ and $c > b$, we shall write
\[
g_{b,c}(\alp; h) = \sum_{bP < x \le cP} e(\alp h(x)).
\]
Note the identity
\[
g_{b,c}(\alp; h) = g_{0,c}(\alp; h) - g_{0,b}(\alp;h), 
\]
which will be used to infer certain bounds.

\begin{lemma} \label{seventh}
 Let $u > 6$ be a real number. Fix a cubic polynomial $h \in \bR[x]$, and fix $L > 0$. Let $0 \le b <c$, and let $g(\alp) = g_{b,c}(\alp; h)$. 
Put
\[ \fN = \{ \alp \in \bR: |g(\alp)| > P^{3/4+\eps} \},\]
and let $\fU$ be the intersection of $\fN$ with an interval of length $L$. Then
\begin{equation} \label{seventhEq}
\int_\fU |g(\alp)|^u \d \alp \ll_{h,L} P^{u-3}.
\end{equation}
\end{lemma}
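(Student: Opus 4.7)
The strategy is to realise $\fU$ as a union of arcs on which an $\eps$-free bound for $g(\alp)$ holds. The two key ingredients are \cite[Theorem 5.1]{Bak1986}, which converts the pointwise hypothesis $|g(\alp)| > P^{3/4+\eps}$ into simultaneous rational approximations of the coefficients of $\alp h(x)$, and \cite[Lemma 4.4]{Bak1986}, which supplies the corresponding $\eps$-free upper bound on $g$. The hypothesis $u > 6$ will enter precisely through the convergence of a tail sum in $q$.

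Write $h(x) = \bet_3 x^3 + \bet_2 x^2 + \bet_1 x + \bet_0$ with $\bet_3 \ne 0$. For each $\alp \in \fU$, Theorem 5.1 produces a positive integer $q \le P^{3/4-\delta}$ (where $\delta = \delta(\eps) > 0$) and integers $a_1, a_2, a_3$ satisfying $|q\bet_j \alp - a_j| \le P^{-j+3/4-\delta}$ for $j = 1, 2, 3$. Thus $\fU$ is covered by arcs $\fB(q;\ba)$. Since $\fU$ lies in an interval of length $L$, the number of $a_3$ compatible with each $q$ is $\ll_{h,L} q$; once $a_3$ is fixed, the constraint $|q\bet_3 \alp - a_3| \le P^{-9/4-\delta}$ confines $\alp$ to an interval of length $\ll_h q^{-1} P^{-9/4-\delta}$ on which $a_1$ and $a_2$ are determined up to $O_h(1)$ choices.

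On each arc I apply Lemma 4.4, which gives the standard cubic $\eps$-free estimate
\[ |g(\alp)| \ll \frac{P}{(q + P^3 |q \bet_3 \alp - a_3|)^{1/3}}. \]
Substituting $t = q \bet_3 \alp - a_3$ reduces matters to a one-dimensional integral and yields
\[ \int_{\fB(q;\ba)} |g|^u \d \alp \ll_h P^{u-3} q^{-u/3}, \]
valid for $u > 3$. Summing over the $\ll_{h,L} q$ arcs for each $q$, and then over $q \ge 1$,
\[ \int_\fU |g|^u \d \alp \ll_{h,L} P^{u-3} \sum_{q \ge 1} q^{1-u/3} \ll_{h,L} P^{u-3}, \]
the tail converging precisely because $u > 6$.

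The most delicate step is the first: extracting the multidimensional Diophantine approximation from a general cubic Weyl sum via Theorem 5.1, and checking the exponent bookkeeping so that the arcs fit the form required by Lemma 4.4. Beyond that the argument is essentially mechanical, but the $\eps$-freeness of Lemma 4.4 is indispensable—a $P^\eps$ loss in the pointwise bound would survive into the summation over $q$ and leave us with $P^{u-3+\eps}$, which would be too weak for the downstream applications.
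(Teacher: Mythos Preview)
Your proposal is correct and follows essentially the same route as the paper: apply \cite[Theorem 5.1]{Bak1986} to cover $\fU$ by arcs indexed by $(q,\bv)$, use \cite[Lemma 4.4]{Bak1986} for a pointwise bound on each arc, then integrate and sum over $q$, the convergence being guaranteed by $u>6$. Two small points of bookkeeping deserve attention: first, since $g=g_{b,c}$ rather than $g_{0,c}$, the paper observes that one of $|g_{0,b}(\alp;h)|$, $|g_{0,c}(\alp;h)|$ must exceed $\tfrac12 P^{3/4+\eps}$ before invoking Baker's Theorem 5.1; second, \cite[Lemma 4.4]{Bak1986} gives only the approximation $g(\alp)=q^{-1}S(q,\bv)I(\bbeta)+O(q^{2/3}P^\eps)$, and the paper then appeals to \cite[Theorems 7.1 and 7.3]{Vau1997} for the Gauss sum and oscillatory integral bounds to reach $g(\alp)\ll q^{\eps-1/3}P(1+P^3|\bet_3|)^{-1/3}$ (the harmless $q^\eps$ is absorbed in the convergent sum $\sum_q q^{1+\eps-u/3}$).
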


\begin{proof} By changing variables, we may assume without loss that $h$ is monic. Let
\[ h(x) = x^3 + a_2x^2 + a_1x + a_0, \]
where $a_0, a_1, a_2 \in \bR$. For $\alp \in \bR$, put $\alp_j = a_j \alp$ ($j=0,1,2$) and $\alp_3 = \alp$. For $q \in \bN$ and $\bv \in \bZ^3$, put 
\[ 
S(q,\bv) = \sum_{x=1}^q e((v_3x^3+v_2x^2+v_1x)/q).
\]
For $\bet_0, \bet_1, \bet_2, \bet_3 \in \bR$, put
\[
I(\bbeta) = \int_{bP}^{cP} e(\bet_3 x^3 + \beta_2x^2 + \bet_1 x + \bet_0) \d x.
\]

Let $\alp \in \fU$. At least one of $|g_{0,b}(\alp; h)|$ and $|g_{0,c}(\alp; h)|$ must exceed $\frac12 P^{3/4+\eps}$. Thus, by \cite[Theorem 5.1]{Bak1986}, there exist integers $q,v_3,v_2$ and $v_1$ such that 
\[
0< q < P^{3/4},
\]
\begin{equation} \label{coprimality}
(q, v_3, v_2, v_1) = 1, \qquad (q,v_3,v_2) < P^\eps
\end{equation}
and
\begin{equation}  \label{approx}
| q \alp_j - v_j| < P^{3/4-j} \qquad (j=1,2,3).
\end{equation}
For positive integers $q$, let $V(q)$ denote the set of $\bv \in \bZ^3$ satisfying \eqref{coprimality}. For $\bv \in \bZ^3$ and $q \in \bN$, denote by $\fU(q,\bv)$ the set of $\alp \in \fU$ satisfying \eqref{approx}. 

Let $\alp \in \fU(q,\bv)$, where $q < P^{3/4}$ and $\bv \in V(q)$. Let
\[
\beta_j = \alp_j - q^{-1}v_j \qquad (j=1,2,3)
\]
and $\beta_0 = \alp_0$. It follows from \cite[Lemma 4.4]{Bak1986} that
\[
g(\alp) - q^{-1}S(q,\bv)I(\bbeta) \ll q^{2/3}P^\eps.
\]
Now \cite[Theorems 7.1 and 7.3]{Vau1997} give
\begin{align*}  g(\alp) &\ll q^{2/3}P^\eps + q^{\eps-1/3}P(1+P^3|\bet_3|)^{-1/3} 
\\ 
&\ll  q^{\eps-1/3}P(1+P^3|\bet_3|)^{-1/3}. 
\end{align*}

Specifying $q,v_3$ and $\bet_3$ determines $\alp$. Moreover, if $q \in \bN$ then there are $O_L(q)$ integers $v_3$ satisfying $|q \alp - v_3| < P^{-9/4}$ for some $\alp \in \fU$. Hence
\begin{align*}
\int_\fU |g(\alp)|^u \d \alp 
&\le \sum_{q < P^{3/4}} \sum_{\bv \in V(q)} \int_{\fU(q,\bv)}  |g(\alp)|^u \d \alp \\
&\ll \sum_{q < P^{3/4}} q^{\eps + 1 - u/3} \int_0^\infty P^u (1+P^3 \bet)^{-u/3} \d \bet.
\end{align*}
As $u > 6$ and $\eps$ is small, we now have \eqref{seventhEq}.
\end{proof}

We will require Freeman's bounds on Davenport-Heilbronn minor arcs. In \cite[Lemmas 8 and 9]{Fre2003}, the underlying variables lie in the range $(0,P]$. The same results hold, with the same proof, when the underlying variables lie in $(bP,cP]$ for some fixed real numbers $b \ge 0$ and $c > b$. We summarise thus.

\begin{lemma} \label{Freeman}
Let $\xi < 1$ be a positive real number, and let $h_1, h_2 \in \bR[y]$ be cubic polynomials satisfying the irrationality condition. Let $0 \le b < c$, and let
\[ 
g_i(\alp) = g_{b,c}(\alp; h_i), \qquad i=1,2. 
\]
Then there exists a positive real-valued function $T(P)$ such that 
\[ \lim_{P \to \infty} T(P) = \infty\]
and
\begin{equation} \label{FreemanEq}
\sup_{ P^{\xi-3} \le |\alp| \le T(P)} 
|g_1(\alp) g_2(\alp)|
\ll P^2T(P)^{-1}.
\end{equation}
\end{lemma}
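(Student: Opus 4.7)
The plan is to reduce directly to Freeman's original results via the identity
\[ g_i(\alp) = g_{0,c}(\alp; h_i) - g_{0,b}(\alp; h_i) \qquad (i = 1, 2) \]
recorded just before the lemma statement. Expanding the product $g_1(\alp) g_2(\alp)$ as an alternating sum of four terms of the form $g_{0,\sig}(\alp; h_1) g_{0,\tau}(\alp; h_2)$ with $\sig, \tau \in \{b, c\}$, it suffices to bound each such product. Each factor is then a cubic Weyl sum over an interval of the form $(0, \kappa P]$ with $\kappa \in \{b, c\}$, so \cite[Lemmas 8 and 9]{Fre2003} apply after a fixed rescaling of $P$. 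Since the pair $(h_1, h_2)$ still satisfies the irrationality condition, these produce, for each of the four pairs $(\sig,\tau)$, a function tending to infinity with the required bound; the minimum of these four functions plays the role of $T(P)$.

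Alternatively, I would verify that Freeman's argument itself adapts directly, which clarifies why the change of interval is innocuous. Applying \cite[Theorem 5.1]{Bak1986} to $g_i(\alp)$ shows that whenever $|g_i(\alp)| > P^{1-\rho}$, the coefficients $\alp \bet_{ij}$ admit simultaneous rational approximations $v_{ij}/q_i$ with $q_i$ bounded by a small power of $P$ and $|q_i \alp \bet_{ij} - v_{ij}| \ll P^{-j+\del}$ for $j=1,2,3$. This conclusion depends only on the summation interval having length $\asymp P$, not on its starting point. If both $|g_1(\alp)|$ and $|g_2(\alp)|$ are simultaneously large, then combining the approximations indexed by the pair $(i_1, j_1)$ and $(i_2, j_2)$ from the irrationality condition produces a rational $v/q$ with $q$ controlled by a power of $P$ such that $|\bet_{i_1 j_1}/\bet_{i_2 j_2} - v/q|$ is small relative to a quantity depending on $|\alp|$. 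Since this ratio is irrational, a compactness argument yields a function $T(P) \to \infty$ past which the required approximation cannot occur once $P$ is large, forcing $|g_1(\alp) g_2(\alp)| \ll P^2 T(P)^{-1}$ throughout the range $P^{\xi-3} \le |\alp| \le T(P)$.

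The main obstacle is that the irrationality condition is purely qualitative, so one cannot exhibit an explicit rate of growth for $T(P)$. Freeman handles this through a diagonal construction that depends only on the ratio being irrational and on the summation interval having length $\asymp P$, and hence carries over to our setting without modification. The lower bound $|\alp| \ge P^{\xi-3}$ is needed to ensure the rational approximations extracted above are nontrivial, while the upper cutoff $|\alp| \le T(P)$ is precisely what the diagonal argument produces. The rest is routine bookkeeping of implied constants, which depend only on $b$, $c$, and the coefficients of $h_1, h_2$.
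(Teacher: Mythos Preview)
Your second approach---observing that Freeman's argument in \cite[Lemmas 8 and 9]{Fre2003} depends only on the summation interval having length $\asymp P$, so it carries over verbatim to $(bP, cP]$---is exactly what the paper does: it simply asserts before the lemma that ``the same results hold, with the same proof'' for the shifted interval, and records no formal proof. The paper also spells out, just after the lemma, the passage from Freeman's $o(P^2)$ conclusion to the stated $\ll P^2 T(P)^{-1}$ form via $T_0(P) = \min(T(P), T_1(P))$, where $T_1(P) \to \infty$ witnesses the $o(P^2)$ bound; you implicitly use this but might make it explicit.

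Your first approach (expanding via the identity into four products and citing Freeman as a black box) has a small wrinkle: in a cross term such as $g_{0,c}(\alp; h_1)\, g_{0,b}(\alp; h_2)$ with $b > 0$, the two factors are over intervals of different lengths $cP$ and $bP$, so no single ``fixed rescaling of $P$'' puts both into the form Freeman literally states. This is easily repaired---precisely by your second paragraph's observation that Baker's approximation conclusion is insensitive to the interval endpoints provided the length is $\asymp P$---but it means the first route is not a pure black-box reduction and ultimately collapses into the second.
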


This may appear stronger than Freeman's conclusion that
\begin{equation} \label{Freeman0} 
\sup_{ P^{\xi-3} \le |\alp| \le T(P)} |g_1(\alp) g_2(\alp)| = o(P^2).
\end{equation}
However, the bound \eqref{Freeman0} gives a positive real-valued function $T_1(P)$ such that 
\[ \lim_{P \to \infty} T_1(P) = \infty \]
and
\[
\sup_{ P^{\xi-3} \le |\alp| \le T(P)} |g_1(\alp) g_2(\alp)| \ll P^2 T_1(P)^{-1}.
\]
By putting $T_0(P) = \min(T(P),T_1(P))$, we obtain \eqref{FreemanEq} with $T_0(P)$ in place of $T(P)$. The advantage of \eqref{FreemanEq} over \eqref{Freeman0} is not seen until \S \ref{AF}.

\section{A Waring-type result}
\label{War}

In this section we prove Theorem \ref{Waring}. By fixing the variables $x_{10}, \ldots, x_s$ if necessary, we may plainly assume that $s=9$, and that
\begin{equation} \label{wlog} 0 \le \mu_1, \ldots, \mu_s < 1.\end{equation}
Let $\gam$ be a small positive real number. Define $P$ by $\tau = 7.1P^3$, and put 
\[ g_i(\alp) = 
\begin{cases}
f_2(\alp,\mu_i,P), & i=1,2,\ldots,7\\
f_2(\alp,\mu_i,P^{5/6}), & i= 8,9,
\end{cases}
\]
where we recall \eqref{fDefs}. By \eqref{OrthBounds}, we can show Theorem \ref{Waring} by establishing that
\[
\int_\bR g_1(\alp)\cdots g_9(\alp)e(-\alp\tau) K(\alp) \d \alp \gg P^{17/3}.
\]
Let $0 < \xi < 5/6$, and recall that $\mu_1 \notin \bQ$. With $T(P)$ as in Lemma \ref{Freeman}, applied to the polynomials $(x-\mu_1)^3$ and $(x-\mu_2)^3$, we define our Davenport-Heilbronn major arc by
\begin{equation} \label{dh1}
\fM = \{ \alp \in \bR : |\alp| \le P^{\xi - 3} \}, 
\end{equation}
our minor arcs by
\begin{equation} \label{dh2}
\fm = \{ \alp \in \bR : P^{\xi -3} < |\alp| \le T(P) \}, 
\end{equation}
and our trivial arcs by
\begin{equation} \label{dh3}
\ft = \{ \alp \in \bR :  |\alp| > T(P) \}.
\end{equation}

\begin{lemma} \label{major1} We have
\[
\int_\fM g_1(\alp)\cdots g_9(\alp)e(-\alp\tau) K(\alp) \d \alp \gg P^{17/3}.
\]
\end{lemma}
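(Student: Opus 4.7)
The plan is to apply the standard Davenport--Heilbronn major arc treatment: approximate each exponential sum by a smooth integral, recognise the resulting product integral as a singular integral, and bound that singular integral below by an explicit volume computation.

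On $\fM$, with $\xi$ chosen sufficiently small (say $\xi < 1/2$), I would replace $g_i(\alp) = f_2(\alp, \mu_i, X_i)$ by its continuous analogue $v_i(\alp) = \int_{X_i}^{2X_i} e(\alp(t-\mu_i)^3) \d t$, where $X_i = P$ for $i \le 7$ and $X_i = P^{5/6}$ for $i \in \{8,9\}$. Partial summation gives
\[ |f_2(\alp,\mu,X) - v(\alp,\mu,X)| \ll 1 + |\alp| X^3, \]
so on $\fM$ the difference is $O(P^\xi)$ when $X=P$ and $O(P^{\xi-1/2})$ when $X=P^{5/6}$. Expanding $\prod g_i - \prod v_i$ telescopically and using the trivial bound $|g_i|,|v_i| \le X_i$ yields a total pointwise error $\ll P^{\xi + 23/3}$; integrating over $\fM$ against $K(\alp) \ll 1$ this is $\ll P^{2\xi + 14/3} = o(P^{17/3})$ provided $\xi < 1/2$.

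Next I would extend the range of integration from $\fM$ to $\bR$. The integration-by-parts bound $|v_i(\alp)| \ll X_i \min\bigl(1, (|\alp|X_i^3)^{-1}\bigr)$ implies
\[ \prod_{i=1}^9 |v_i(\alp)| \ll P^{26/3}(1+P^3|\alp|)^{-7}(1+P^{5/2}|\alp|)^{-2}, \]
and together with $K(\alp) \ll \min(1,|\alp|^{-2})$ this makes the contribution from $|\alp| > P^{\xi-3}$ negligible. The remaining main term is the singular integral
\[ \fJ = \int_\bR \prod_{i=1}^9 v_i(\alp) e(-\alp\tau) K(\alp) \d\alp, \]
and opening each $v_i$ and applying \eqref{orth1} with Fubini produces
\[ \fJ = \int_\cB \max\bigl(0, 1 - |F(\bt)-\tau|/\eta\bigr) \d\bt, \]
where $\cB = \prod_{i \le 7}(P,2P] \times \prod_{i \in \{8,9\}}(P^{5/6}, 2P^{5/6}]$.

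The concluding step is the volume computation. Substituting $u_i = (t_i-\mu_i)^3$ produces the Jacobian factor $\prod(3u_i^{2/3})^{-1} \asymp P^{-52/3}$, uniformly on the transformed box. Since $\tau = 7.1 P^3$ exceeds the minimum value $\sum_{i \le 7}(X_i-\mu_i)^3 \approx 7P^3$ of the seven large cubes by exactly $0.1P^3$, and the two small cubes contribute $O(P^{5/2})$, an elementary simplex calculation (of the slab $\sum_{i \le 7}(u_i-P^3) \approx 0.1P^3$ in the positive orthant $[0,7P^3]^7$) shows that, for $P$ large, the $8$-dimensional slice of the transformed box by the hyperplane $\sum u_i = \tau$ has measure $\asymp P^{23}$. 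Convolving with the triangle function of width $\eta$ in the $\sum u_i$ direction then yields $\fJ \gg \eta P^{-52/3} \cdot P^{23} = \eta P^{17/3}$, which is the desired lower bound. The delicate point is this final volume computation: one must verify that the gap $0.1P^3$ between $\tau$ and the minimum sum of seven large cubes genuinely dominates the range $O(P^{5/2})$ contributed by the two auxiliary small variables, so that the hyperplane slice in $\bu$-space is nondegenerate rather than empty. The choice $\tau = 7.1 P^3$ made in the setup of the section guarantees precisely this.
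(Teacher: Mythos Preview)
Your approach is the same as the paper's: approximate each $g_i$ by an integral on $\fM$, extend the range to $\bR$, and evaluate the resulting singular integral via the change of variables to cubes; the only difference is that the paper invokes \cite[Lemma 4.4]{Bak1986} to get an $O(1)$ approximation error (so any $\xi<5/6$ works) rather than your Euler--Maclaurin bound $O(1+|\alp|X^3)$, and uses $I(\alp,P)\ll|\alp|^{-1/3}$ for the tail. One small slip to fix: for $X=P^{5/6}$ with $\xi<1/2$ your bound $1+|\alp|X^3\le 1+P^{\xi-1/2}$ is $O(1)$, not $O(P^{\xi-1/2})$, so the telescoped pointwise error also carries a $P^{47/6}$ term---but after integrating over $\fM$ this contributes $P^{\xi+29/6}=o(P^{17/3})$, and the conclusion stands.
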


\begin{proof}
For real numbers $X>0$ and $\alp$, let
\[
I(\alp,X) = \int_X^{2X} e(\alp x^3) \d x,
\]
and write
\[ I_i(\alp) = 
\begin{cases}
I(\alp,P), & i=1,2,\ldots,7\\
I(\alp,P^{5/6}), & i= 8,9.
\end{cases}
\]
Define 
\begin{align*} 
\cI^{(1)} &= \int_\fM g_1(\alp)\cdots g_9(\alp)e(-\alp\tau) K(\alp) \d \alp, \\
\cI^{(2)} &= \int_\fM I_1(\alp)\cdots I_9(\alp)e(-\alp\tau) K(\alp) \d \alp 
\end{align*}
and
\[ \cI^{(3)} = \int_\bR I_1(\alp)\cdots I_9(\alp)e(-\alp\tau) K(\alp) \d \alp. \]

By \cite[Lemma 4.4]{Bak1986}, if $\alp \in \fM$, $X \in (0,P]$ and $1 \le i \le s$ then
\[
f_2(\alp, \mu_i, X) = \int_X^{2X} e(\alp(x-\mu_i)^3)\d x+ O(1) = I(\alp,X)+O(1).
\]
Recalling \eqref{Kbound}, we now conclude that
\begin{equation} \label{d1}
\cI^{(1)} - \cI^{(2)} \ll 
P^{\xi-3}P^{7+5/6} = o(P^{17/3}),
\end{equation}
since $\xi < 5/6$. By \cite[Theorem 7.3]{Vau1997} we have
\[ I(\alp,P) \ll |\alp|^{-1/3}, \]
and now \eqref{Kbound} and a trivial estimate yield
\begin{equation} \label{d2} \cI^{(3)} - \cI^{(2)} \ll  P^{5/3} \int_{P^{\xi-3}}^\infty \alp^{-7/3} \d \alp
= o(P^{17/3}).
\end{equation}

With $\cR_0 = (P,2P]^7 \times (P^{5/6},2P^{5/6}]^2$, consider
\[
\cI^{(3)} = \int_{\cR_0} \int_\bR e(\alp(x_1^3 + \ldots + x_s^3 -\tau)) K(\alp) \d \alp \d \bx.
\]
By \eqref{orth1}, changing variables gives
\begin{equation} \label{I3first}
\cI^{(3)} \gg \int_{\cR_1} (\eta - |y_1 + \ldots + y_9 - \tau|) \cdot (y_1 \cdots y_9)^{-2/3} \d \by,
\end{equation}
where $\cR_1$ is the set of $\by \in \bR^9$ such that
\begin{align}
\notag P^3 &< y_1, \ldots, y_7 \le 8P^3, \\
\label{y89} P^{5/2} &< y_8, y_9 \le 8P^{5/2} 
\end{align}
and
\[ 
|y_1 + \ldots + y_9 - \tau| < \eta. 
\]
Let $\cV$ denote the set of $\by \in \cR_1$ such that 
\begin{equation} \label{restrict}
 P^3 < y_2, y_3, \ldots, y_7 \le 1.01P^3
\end{equation}
and
\[
|y_1 + \ldots + y_9 - \tau| < \eta/2. 
\]
By positivity of the integrand in \eqref{I3first}, we have
\[
\cI^{(3)} \gg (P^{7 \times 3+2 \times 5/2})^{-2/3} \cdot \meas(\cV) = P^{-52/3} \cdot \meas(\cV).
\]

Since $\tau = 7.1 P^3$, we have
\[ 
P^3 +\eta < \tau - y_2 - \ldots - y_9 < 1.1P^3
\]
whenever the inequalities \eqref{y89} and \eqref{restrict} are satisfied. Hence
\[ 
\meas(\cV) \gg P^{6 \times 3 + 2 \times 5/2} = P^{23},
\]
so
\begin{equation} \label{I3}
\cI^{(3)} \gg P^{-52/3} P^{23}  = P^{17/3}.
\end{equation}
The bounds \eqref{d1}, \eqref{d2} and \eqref{I3} yield the desired result 
\[ 
\cI^{(1)} \gg P^{17/3}. 
\]
\end{proof}

By Lemma \ref{major1}, H\"older's inequality and symmetry, it remains to show that
\begin{equation} \label{rest1}
\int_{\fm \cup \ft} |g_1(\alp) g_2(\alp)g_3(\alp)^5 g_8(\alp)^2| K(\alp) \d \alp = o(P^{17/3}).
\end{equation}
Fix $i \in \{1,2,3\}$, let
\[ \fN = \{ \alp \in \bR: |g_i(\alp)| > P^{3/4+\eps} \}, \]
put $\fn = \bR \setminus \fN$, and let $\fU$ be the intersection of $\fN$ with a unit interval. In view of \eqref{OrthBounds}, Lemma \ref{fourth1} gives
\[
\int_\bR |g_i(\alp)^2 g_8(\alp)^2| K(\alp) \d \alp \ll P^{11/6+\eps}.
\]
Since $\gam$ and $\eps$ are small, we now have
\begin{align}
\notag
\int_\fn |g_i(\alp)^{7-\gam} g_8(\alp)^2| K(\alp) \d \alp  
&\ll (\sup_{\alp \in \fn} |g_i(\alp)| )^{5-\gam} P^{11/6+\eps} \\
\label{ClassicalMinor}
&\le P^{3(5-\gam)/4+11/6+2\eps}= o(P^{17/3-\gam}).
\end{align}

Lemma \ref{seventh} and a trivial estimate yield
\[
\int_\fU |g_i(\alp)^{7-\gam} g_8(\alp)^2 | \d \alp \ll P^{17/3-\gam}
\]
which, recalling \eqref{Kbound}, gives
\begin{equation} \label{MinorMajor}
\int_{\fN}  |g_i(\alp)^{7-\gam} g_8(\alp)^2 | K(\alp) \d \alp \ll P^{17/3-\gam}
\end{equation}
and
\begin{align} \notag
\int_{\ft \cap \fN} |g_i(\alp)^{7 - \gam} g_8(\alp)^2| K(\alp) \d \alp
&\ll P^{17/3 - \gam} \sum_{n=0}^\infty (T(P)+n)^{-2} 
\\ \label{TrivialMajor} &= o(P^{17/3 - \gam} ).
\end{align}
The inequalities \eqref{ClassicalMinor} and \eqref{TrivialMajor}, together with a trivial estimate and H\"older's inequality, yield
\begin{equation} \label{trivial}
\int_\ft |g_1(\alp) g_2(\alp) g_3(\alp)^5 g_8(\alp)^2 | K(\alp) \d \alp = o(P^{17/3}).
\end{equation}

Combining \eqref{ClassicalMinor} with \eqref{MinorMajor} gives
\[ \int_\bR |g_i(\alp)^{7-\gam} g_8(\alp)^2| K(\alp) \d \alp \ll P^{17/3 - \gam} \]
for $i=1,2,3$ which, by H\"older's inequality and \eqref{Freeman0}, yields
\[ \int_\fm |g_1(\alp) g_2(\alp)g_3(\alp)^5 g_8(\alp)^2| K(\alp) \d \alp = o(P^{17/3}).\]
This and \eqref{trivial} give \eqref{rest1}, completing the proof of Theorem \ref{Waring}.

\section{A full density result}
\label{FD}

In this section we prove Theorem \ref{FullDensity}. Note that $Z(N)$ is closed and hence measurable, since it is the intersection of $[0,N]$ with
\[ \cap_\bx ((-\infty, F(\bx)-\eta] \cup [F(\bx)+\eta,\infty)),\]
where the intersection is taken over integers $x_1 > \mu_1, \ldots, x_s > \mu_s$. We may plainly assume \eqref{wlog} and, by fixing the variables $x_6, \ldots, x_s$ if necessary, that $s=5$. Put $\lambda = 42/41$. It suffices to show that
\[ \meas(Z(\lambda^{-1}X,X)) = o(X); \]
indeed, if for some $\psi(X) \nearrow \infty$ we have
\[ \meas(Z(\lambda^{-1}X,X)) \le \psi(X)^{-1}X \]
for large positive real numbers $X$, then
\begin{align*}
\meas(Z(N)) &\le \sqrt N+\sum_{i=0}^{ \lceil 0.5 \log_\lambda N \rceil - 1} \frac N{\lambda^i \psi(N/\lambda^i)} \\
&\ll \sqrt{N} + \psi(\sqrt{N})^{-1}N = o(N).
\end{align*}

Let $Z = Z(4.1N, 4.2N)$, and put $\hZ = \meas(Z)$. It remains to show that
\begin{equation} \label{GoalFour}
\hZ = o(N).
\end{equation}
Let $P = N^{1/3}$, and put
\[ 
\tf(\alp) = f_2(\alp, \mu_5, P^{5/6}) \prod_{i \le 4} f_2(\alp,\mu_i,P).
\]
By \eqref{OrthBounds} and \eqref{wlog}, we note that if $\tau \in Z$ then
\begin{equation} \label{zero}
\int_\bR \tf(\alp) e(-\alp \tau) K(\alp) \d \alp =0.
\end{equation}

Let $0 < \xi < 5/6$, and recall that $\mu_1 \notin \bQ$. With $T(P)$ as in Lemma \ref{Freeman}, applied to the polynomials $(x-\mu_1)^3$ and $(x-\mu_1)^3$, we define our Davenport-Heilbronn arcs by \eqref{dh1}, \eqref{dh2} and \eqref{dh3}. The inequality \eqref{rest1}, together with symmetry, H\"older's inequality and a trivial estimate, gives
\begin{equation} \label{MSbound}
\int_{\fm \cup \ft} |\tf(\alp)|^2 K(\alp) \d \alp = o(P^{20/3}).
\end{equation}
By mimicking the proof of Lemma \ref{major1}, one may confirm that
\begin{equation} \label{Major4}
\int_\fM \tf(\alp) e(-\alp \tau) K(\alp) \d \alp \gg P^{11/6}
\end{equation}
uniformly for $\tau \in [4.1N, 4.2N]$. Indeed, an inspection of that argument shows that the only detail requiring attention is the analogue of \eqref{d2}, which in the current setting becomes
\[ \cI^{(3)} - \cI^{(2)} \ll  P^{5/6} \int_{P^{\xi-3}}^\infty \alp^{-4/3} \d \alp
= o(P^{11/6}).
\]

Put
\[ T= \int_Z \int_{\fm \cup \ft} \tf(\alp) e(-\alp\tau) K(\alp)\d \alp \d \tau. \]
From \eqref{zero} and \eqref{Major4}, we have
\[
\int_{\fm \cup \ft} \tf(\alp) e(-\alp \tau) K(\alp) \d \alp \gg P^{11/6}
\]
uniformly for $\tau \in Z$, so
\begin{equation} \label{A}
T \gg \hZ P^{11/6}.
\end{equation}
For $\tau \in Z$, define the complex number $\theta_\tau$ by
\begin{equation*}
\Bigl |\int_{\fm \cup \ft} \tf(\alp) e(-\alp \tau) K(\alp) \d \alp \Bigr | = 
\theta_\tau \int_{\fm \cup \ft} \tf(\alp) e(-\alp \tau) K(\alp) \d \alp, 
\end{equation*}
and note that $|\theta_\tau|=1$. For $\alp \in \bR$ let
\[ H(\alp) = \int_Z \theta_\tau e(-\alp\tau) \d \tau,\]
so that
\begin{align} \notag
|T| & \le  \int_Z  \Biggl | \int_{\fm \cup \ft}\tf(\alp) e(-\alp \tau) K(\alp) \d \alp  \Biggr  | \d \tau
\\ \label {FubiniIneq} &=\int_{\fm \cup \ft} \tf(\alp) H(\alp) K(\alp) \d \alp .
\end{align}

Let
\[ J= \int_\bR |H(\alp)|^2 K(\alp) \d \alp.\]
Applying Fubini's theorem and \eqref{OrthBounds} yields
\begin{equation} \label{Jbound}
J = \int_{Z \times Z} \theta_\tau \overline{\theta_\nu} \int_\bR e(\alp(\nu-\tau))K(\alp) \d \alp \d \tau \d \nu \ll \hZ.
\end{equation}
By Cauchy's inequality,
\[ \Bigl |\int_{\fm \cup \ft} \tf(\alp)H(\alp)K(\alp) \d \alp \Bigr |^2 \ll J \int_{\fm \cup \ft} |\tf(\alp)|^2 K(\alp) \d \alp. \]
Substituting the bound \eqref{FubiniIneq} into the left hand side, and substituting the bounds \eqref{MSbound} and \eqref{Jbound} into the right hand side, yields
\begin{equation} \label{B}
T^2
=o(\hZ P^{20/3}).
\end{equation}
By \eqref{A} and \eqref{B}, we have $\hZ = o(P^3)$. This gives \eqref{GoalFour}, completing the proof of Theorem \ref{FullDensity}.

\section{Unrepresentation}
\label{LD}

In this section we prove Theorem \ref{LowerDensity}. For $\bx \in \bZ^3$ and $\tau \in \bR$, let
\[ \iota(\bx, \tau) = 
\begin{cases}
1, & \text{if } |F(\bx) - \tau| < \eta \\
0, & \text{otherwise}
\end{cases}
\]
and
\[ I_\bx = \int_0^N \iota (\bx, \tau) \d \tau. \]
Let $N(\tau)$ be as defined in the preamble to Theorem \ref{AsymptoticFormula}, and put $\hZ = \meas(Z(N))$. Then
\[ N - \hZ \le \int_0^N N(\tau) \d \tau = \sum_\bx I_\bx, \]
where the summation is over integer triples $x_1 > \mu_1$, $x_2 > \mu_2$, $x_3 > \mu_3$. 

Note that $I_\bx \le 2 \eta$. Moreover, if $\bx \in (\mu_1, \infty) \times (\mu_2, \infty) \times (\mu_3, \infty)$ and $\iota(\bx, \tau) = 1$ for some $\tau \in [0,N]$ then
\[ F(\bx + \br) < N + 10N^{2/3} \]
for all $\br \in [0,1]^3$. Hence
\begin{align*} N - \hZ &\le 2 \eta \cdot \meas (\{ \bgam \in \bR_{>0}^3 :  \gam_1^3 + \gam_2^3 + \gam_3^3 < N + 10N^{2/3} \} )\\
& = 2 \eta (N+10N^{2/3}) \cdot \meas (\{ \bgam \in \bR_{>0}^3 :  \gam_1^3 + \gam_2^3 + \gam_3^3 < 1 \}).
\end{align*}
Since $N$ is large and $\eta < 1/4$, we now have $N - \hZ < N/2$, so $\meas(Z(N)) > N/2$.

\section{An asymptotic formula}
\label{AF}

In this section we prove Theorem \ref{AsymptoticFormula}. Let $\tau$ be a large positive real number, and put 
\begin{equation} \label{Pdef}
P=\tau^{1/3}.
\end{equation}
We may plainly assume \eqref{wlog}. One can easily check that
\[ N(\tau) - N^*(\tau) \ll P^{2(s-1)/3} = o(\tau^{s/3-1}), \]
where $N^*(\tau)$ is the number of integral $\bx \in [1,P]^s$ satisfying \eqref{main}. It therefore suffices to prove the theorem with $N^*(\tau)$ in place of $N(\tau)$.

For $i=1,2,\ldots,s,$ write
\[ g_i(\alp) = f_1(\alp, \mu_i, P), \]
where we cast our minds back to \eqref{fDefs}. With $0 < \xi < 1$, and with $T(P)$ as in Lemma \ref{Freeman}, applied to the polynomials $(x-\mu_1)^3$ and $(x- \mu_2)^3$, we define our Davenport-Heilbronn arcs by \eqref{dh1}, \eqref{dh2} and \eqref{dh3}.

Next we deploy a kernel function introduced in \cite[\S 2.1]{Fre2002}. Put 
\begin{equation} \label{Ldef}
L(P) = \min(\log T(P), \log P), \qquad \delta = \eta L(P)^{-1}
\end{equation}
and
\[ K_{\pm}(\alp) = \frac {\sin(\pi \alp \delta) \sin(\pi \alp(2 \eta \pm \delta))} {\pi^2 \alp^2 \delta}. \]
From \cite[Lemma 1]{Fre2002} and its proof, we have
\begin{equation} \label{Kbounds}
K_\pm(\alp) \ll \min(1, L(P) |\alp|^{-2})
\end{equation}
and
\begin{equation} \label{Ubounds}
0 \le \int_\bR e(\alp t) K_{-}(\alp)\d\alp \le U_\eta(t) \le \int_\bR e(\alp t) K_{+}(\alp)\d\alp \le 1,
\end{equation}
where we recall the definition \eqref{ind}. Moreover, the expression
\[ \Bigl|  \int_\bR e(\alp t) K_\pm(\alp) \d \alp - U_\eta(t) \Bigr| \]
is less than or equal to 1, and is equal to 0 whenever $| | t|- \eta| > \eta L(P)^{-1}$. 

It will be convenient to work with nonnegative kernels in part of the analysis, as in \cite[\S 2]{PW2013}. We note that
\begin{equation} \label{decompose}
|K_{\pm}(\alp)|^2 = K_1(\alp)K_2^{\pm}(\alp),
\end{equation}
where
\[ K_1(\alp) = \sinc^2(\pi \alp \delta) \]
and
\[ K_2^{\pm}(\alp) = (2\eta \pm \delta)^2 \sinc^2(\pi \alp (2\eta \pm \delta)). \]
As \eqref{OrthBounds} holds for all $\eta > 0$, we also have
\begin{equation} \label{aux1}
0 \le \int_\bR e(\alp t) K_1(\alp) \d \alp \le \delta^{-1} U_\delta(t) \ll L(P) \cdot U_\delta(t)
\end{equation}
and
\begin{equation} \label{aux2}
0 \le \int_\bR e(\alp t) K_2^{\pm}(\alp) \d \alp \le (2\eta \pm \delta) U_{2\eta \pm \delta}(t) \ll U_{2 \eta \pm \delta}(t).
\end{equation}

From \eqref{Ubounds} we have
\[
R_{-}(P)  \le N^*(\tau) \le R_+(P), 
\]
where 
\[ R_\pm(P) = \int_\bR g_1(\alp) \cdots g_s(\alp) e(-\alp \tau) K_\pm(\alp) \d \alp. \]
It therefore remains to show that
\begin{equation} \label{goal3} 
R_\pm(P) = 2 \eta \Gamma(4/3)^s \Gamma(s/3)^{-1} P^{s-3} + o(P^{s-3}).
\end{equation}
We begin by demonstrating the bound
\begin{equation} \label{rest3}
\int_{\fm \cup \ft} g_1(\alp) \cdots g_s(\alp) e(-\alp \tau)K_\pm(\alp) \d \alp = o(P^{s-3}).
\end{equation}
For this purpose it suffices, by symmetry, H\"older's inequality and a trivial estimate, to prove that
\begin{equation} \label{rest3alt}
\int_{\fm \cup \ft} |g_1(\alp) g_2(\alp) g_3(\alp)^9 K_\pm(\alp)| \d \alp = o(P^8).
\end{equation}

Let
\[ \fN = \{ \alp \in \bR : |g_3(\alp)| > P^{3/4+\eps} \}, \]
put $\fn = \bR \setminus \fN$, and let $\fU$ be the intersection of $\fN$ with a unit interval. For subsets $U \subseteq \bR$, write 
\[ \cI_\pm(U) = \int_U |g_1(\alp) g_2(\alp) g_3(\alp)^9 K_\pm(\alp)| \d \alp.\]
By \eqref{aux1}, \eqref{aux2} and Lemma \ref{SecondMoment}, we have
\[ \int_\bR |g_1(\alp)|^2 K_1(\alp) \d \alp \ll P \cdot L(P) \]
and
\[ \int_\bR |g_2(\alp)|^2 K_2^{\pm}(\alp) \d \alp \ll P. \]
Cauchy's inequality and \eqref{decompose} now give
\[ \int_\bR |g_1(\alp)g_2(\alp)K_{\pm}(\alp)| \d \alp \ll P \cdot L(P)^{1/2}. \]
Thus, recalling \eqref{Ldef}, we have
\begin{align} \notag
\cI_\pm(\fn) &\ll  (\sup_{\alp \in \fn} |g_3(\alp)|)^9 P \cdot L(P)^{1/2} \\
 \label{ClassicalMinor3} &\le P^{9 \times 3/4 + \eps + 1} L(P)^{1/2} = o(P^8).
\end{align}

By Lemma \ref{seventh}, we have
\begin{equation} \label{ninth}
\int_\fU |g_3(\alp)|^9 \d \alp \ll P^6.
\end{equation}
Combining this with \eqref{FreemanEq} gives
\begin{align*}
\int_{\fm \cap \fU} |g_1(\alp)g_2(\alp)g_3(\alp)^9| \d \alp 
&\ll (\sup_{\alp \in \fm} |g_1(\alp)g_2(\alp)|) \cdot P^6 \\
& \ll P^8 T(P)^{-1}
\end{align*}
which, recalling \eqref{Ldef} and \eqref{Kbounds}, yields
\begin{equation} \label{MinorMajor3}
\cI_\pm(\fm \cap \fN) \ll P^8 T(P)^{-1} L(P) = o(P^8).
\end{equation}
Moreover, \eqref{ninth} and a trivial estimate give
\[
\int_\fU |g_1(\alp)g_2(\alp)g_3(\alp)^9| \d \alp \ll P^8,
\]
so by \eqref{Ldef} and \eqref{Kbounds} we have
\begin{equation} \label{TrivialMajor3}
\cI_\pm(\ft \cap \fN) \ll P^8\sum_{n=0}^\infty L(P)\cdot (T(P)+n)^{-2}  = o(P^8).
\end{equation}
Since 
\[
\fm \cup \ft \subseteq \fn \cup (\fm \cap \fN) \cup (\ft \cap \fN),
\]
the inequalities \eqref{ClassicalMinor3}, \eqref{MinorMajor3} and \eqref{TrivialMajor3} give \eqref{rest3alt}, which in particular establishes \eqref{rest3}.

Next we consider
\begin{equation} \label{I1def}
\cI_\pm^{(1)} = \int_\fM g_1(\alp) \cdots g_s(\alp) e(-\alp \tau) K_\pm (\alp) \d \alp, 
\end{equation}
following the recipe given in \cite[\S 6]{Woo2003}. Define
\[ I(\alp) = \int_0^P e(\alp x^3) \d x,\]
\[ \cI_\pm^{(2)} = \int_\fM I(\alp)^s e(-\alp \tau) K_\pm (\alp) \d \alp \]
and 
\[ \cI_\pm^{(3)} = \int_\bR I(\alp)^s e(-\alp \tau) K_\pm (\alp) \d \alp. \]
Using \eqref{Kbounds} in place of \eqref{Kbound}, we may mimic the proofs of \eqref{d1} and \eqref{d2} to deduce that 
\begin{equation} \label{d123}
\cI_\pm^{(1)} -  \cI_\pm^{(2)} = o(P^{s-3})
\end{equation}
and
\begin{equation} \label{d233}
\cI_\pm^{(2)} - \cI_\pm^{(3)} = o(P^{s-3}).
\end{equation}

The final step is to provide asymptotics for
\[ \cI_\pm^{(3)} = \int_{(0,P]^s} \int_\bR  e(\alp(x_1^3+\ldots+x_s^3- \tau)) K_\pm (\alp) \d \alp \d \bx. \]
Changing variables with $u_i = P^{-3} x_i^3$ ($1 \le i \le s$) yields
\[ \cI_\pm^{(3)} = 3^{-s}P^s \int_{(0,1]^s} (u_1 \cdots u_s)^{-2/3} \Delta_\pm(\bu) \d \bu, \]
where 
\[ \Delta_\pm(\bu) =
\int_\bR e(\alp(P^3(u_1+\ldots+u_s)-\tau))K_\pm(\alp) \d \alp.
\]
Put
\[
\Delta^*(\bu) = \begin{cases}
1, &\text{if } |u_1+\ldots+u_s -1| < \eta P^{-3} \\
0,& \text{if } |u_1+\ldots+u_s -1| \ge \eta P^{-3}
\end{cases}
\]
and
\[ I^* = \int_{(0,1]^s} (u_1 \cdots u_s)^{-2/3} \Delta^*(\bu) \d \bu. \]

In view of \eqref{Pdef} and the discussion following \eqref{Ubounds}, we see that
\[ \Delta_\pm(\bu) = \Delta^*(\bu), \]
except possibly when
\begin{equation} \label{except}
||u_1+\ldots+u_s-1| - \eta P^{-3}| \le \eta P^{-3} L(P)^{-1},
\end{equation}
in which case $|\Delta_\pm(\bu) - \Delta^*(\bu)| \le 1$. If \eqref{except} is satisfied then there exists $j \in \{1,2,\ldots, s\}$ such that $u_j \gg 1$. For $j=1,2,\ldots,s$, let $T_j$ denote the set of $\bu \in (0,1]^s$ satisfying \eqref{except} and $u_j \gg 1$. Now
\[
\int_{T_j} (u_1 \cdots u_s)^{-2/3} \d \bu \ll P^{-3} L(P)^{-1} \qquad (1 \le j \le s),
\]
so
\[ \int_{(0,1]^s} (u_1 \cdots u_s)^{-2/3} (\Delta_\pm(\bu) - \Delta^*(\bu)) \d \bu = o(P^{-3}). \]
Thus,
\begin{equation} \label{IndDif}
\cI_\pm^{(3)}-3^{-s}P^s  I^* = o(P^{s-3}).
\end{equation}

For $\bu \in (0,1]^s$, write $\bu' = (u_1,\ldots,u_{s-1})$ and $Y = 1- u_1 - \ldots - u_{s-1}$. For $S \subseteq (0,1]^s$, define
\[ I(S) = \int_S (u_1 \cdots u_s)^{-2/3} \Delta^*(\bu) \d \bu. \]
Let $I_1 = I((0,1]^{s-1} \times (0,P^{-1}))$ and $I_2 = I((0,1]^{s-1} \times [P^{-1},1])$,
so that 
\begin{equation} \label{tweak}
I^* = I_1 + I_2.
\end{equation}
First we show that 
\begin{equation} \label{3I1}
I_1 = o(P^{-3}).
\end{equation}
Since $\int_0^{P^{-1}} u_s^{-2/3} \d u_s = o(1)$, it suffices for \eqref{3I1} to show that
\begin{equation} \label{3I1g}
\int_{(0,1]^{s-1}} (u_1 \cdots u_{s-1})^{-2/3} \Delta^*(\bu) \d \bu' \ll P^{-3},
\end{equation}
uniformly for $u_s \in (0, P^{-1})$. Let $0 < u_s < P^{-1}$. If $\Delta^*(\bu) = 1$ then there exists $j \in \{1,2,\ldots,s-1\}$ such that $u_j \gg 1$. For $j = 1,2,\ldots,s-1$, let $R_j$ denote the set of $\bu' \in (0,1]^{s-1}$ such that $\Delta^*(\bu)=1$ and $u_j \gg 1$. Now
\[ \int_{R_j} (u_1 \cdots u_{s-1})^{-2/3} \d \bu' \ll P^{-3} \qquad (1 \le j \le s-1),\]
which establishes \eqref{3I1g} and in particular \eqref{3I1}.

If $\Delta^*(\bu) = 1$ and $u_s \ge P^{-1}$ then $|u_s - Y| < \eta P^{-3}$ so, by the mean value theorem,
\[ u_s^{-2/3} - Y^{-2/3} \ll (P^{-1})^{-5/3} P^{-3}
= P^{-4/3}. \]
Combining this with the bound
\[ \int_{(0,1]^{s-1}} (u_1 \cdots u_{s-1})^{-2/3}  \int_{P^{-1}}^1 \Delta^*(\bu) \d u_s \d \bu' \ll P^{-3} \]
gives
\begin{equation} \label{3I23}
I_2 - I_3 = o(P^{-3}),
\end{equation}
where 
\[ I_3 = \int_{(0,1]^{s-1}} (u_1 \cdots u_{s-1}Y)^{-2/3}  \int_{P^{-1}}^1 \Delta^*(\bu) \d u_s \d \bu'. \]
Let $R$ be the set of $\bu' \in (0,1]^{s-1}$ such that $Y > 0$. As $|u_s - Y| < \eta P^{-3}$ whenever $\Delta^*(\bu) \ne 0$, we have
\[
I_3 = \int_R (u_1 \cdots u_{s-1}Y)^{-2/3}  \int_{P^{-1}}^1 \Delta^*(\bu) \d u_s \d \bu'.
\]

Next we show that
\begin{equation} \label{3I34}
I_3 - I_4 = o(P^{-3}),
\end{equation}
where
\begin{align*} I_4 &= \int_R (u_1 \cdots u_{s-1}Y)^{-2/3}  \int_\bR \Delta^*(\bu) \d u_s \d \bu'  \\
&= 2 \eta P^{-3} \int_R (u_1 \cdots u_{s-1}Y)^{-2/3} \d \bu'.
\end{align*}
Let $\bu \in R \times \bR$ be such that $\Delta^*(\bu) = 1$. Then $|u_s - Y| < \eta P^{-3}$, so $u_s > -\eta P^{-3}$. If $u_s < P^{-1}$ then $Y < 2P^{-1}$ and $u_j \gg 1$ for some $j \in \{1,2,\ldots, s-1\}$, so we can change variables from $u_j$ to $Y$ to show that the contribution from these $\bu$ is $o(P^{-3})$. Meanwhile, if $u_s>1$ then $Y > 1 - \eta P^{-3}$ and $u_1, \ldots, u_{s-1} \ll P^{-3}$, so the contribution from these $\bu$ is also $o(P^{-3})$. We have established \eqref{3I34}.

The computation
\begin{align*}
\int_R (u_1 \cdots u_{s-1}Y)^{-2/3} \d \bu' 
&=  \underset{ u_1 + \ldots + u_{s-1} < 1 }{\int_0^1 \cdots \int_0^1} (u_1 \cdots u_{s-1}Y)^{-2/3} \d \bu'
\\  &= \Gamma(1/3)^s \Gamma(s/3)^{-1}
\end{align*}
is standard (see \cite[p. 22]{Dav2005}). Therefore
\[
I_4 = 2 \eta P^{-3} \Gamma(1/3)^s \Gamma(s/3)^{-1}.
\]
In view of \eqref{tweak}, \eqref{3I1}, \eqref{3I23} and \eqref{3I34}, we now have
\[ I^* = 2 \eta \Gamma(1/3)^s \Gamma(s/3)^{-1} P^{-3} + o(P^{-3}). \]
Combining this with \eqref{d123}, \eqref{d233} and \eqref{IndDif} yields
\begin{equation} \label{finally}
\cI_\pm^{(1)} = 2 \eta \Gamma(4/3)^s \Gamma(s/3)^{-1} P^{s-3} + o(P^{s-3}),
\end{equation}
where we recall \eqref{I1def}. Finally, \eqref{rest3} and \eqref{finally} give \eqref{goal3}, completing the proof of Theorem \ref{AsymptoticFormula}.

\section{Sums of cubic polynomials}
\label{gen}

In this section we prove Theorem \ref{generalisation}. Without loss of generality $h_1$ and $h_2$ satisfy the irrationality condition. By fixing the variables $x_{10}, \ldots, x_s$ if necessary, we may plainly assume that $s = 9$, that $\tau=0$, and that $\eta = 1$. Let $a_1, \ldots, a_9$ be the leading coefficients of $h_1, \ldots, h_9$ respectively. Without loss of generality 
\[ a_i < 0 < a_1 \le |a_2| \qquad (2 \le i \le 9). \]
Let $\omega$ be a small positive real number, and let $P$ be a large positive real number. Define the real number $c$ by
\[ c^3 = -8a_1^{-1}(a_2 + \ldots + a_7), \]
and note that $c \ge 2$. Put
 \[ g_i(\alp) = \begin{dcases}
\sum_{P < x \le cP} e(\alp h_i(x)), &i=1,2,\ldots,7 \\
\sum_{P^{4/5} < x \le 2P^{4/5}} e(\alp h_i(x)), &i=8,9. 
\end{dcases}
\]
Recall \eqref{Kdef}. In light of \eqref{OrthBounds}, it suffices to show that
\[
\int_\bR g_1(\alp) \cdots g_9(\alp) K(\alp) \d \alp \gg P^{28/5}.
\]
We essentially follow \S \ref{War}. With $0 < \xi < 4/5$, and with $T(P)$ as in Lemma \ref{Freeman}, we define our Davenport-Heilbronn arcs by \eqref{dh1}, \eqref{dh2} and \eqref{dh3}. 

\begin{lemma} \label{major5} We have
\[
\int_\fM g_1(\alp)\cdots g_9(\alp) K(\alp) \d \alp \gg P^{28/5}.
\]
\end{lemma}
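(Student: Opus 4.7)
The strategy will mirror the proof of Lemma \ref{major1}. Introduce the continuous analogues
\[
I_i(\alp) = \int_P^{cP} e(\alp h_i(x))\,\d x \ \ (1 \le i \le 7), \qquad I_i(\alp) = \int_{P^{4/5}}^{2P^{4/5}} e(\alp h_i(x))\,\d x \ \ (i = 8, 9),
\]
and set $\cI^{(1)} = \int_\fM g_1\cdots g_9\, K(\alp)\,\d\alp$, $\cI^{(2)} = \int_\fM I_1 \cdots I_9\, K(\alp)\,\d\alp$, and $\cI^{(3)} = \int_\bR I_1 \cdots I_9\, K(\alp)\,\d\alp$. It suffices to establish $\cI^{(1)} \approx \cI^{(2)} \approx \cI^{(3)} \gg P^{28/5}$.

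For the first approximation, \cite[Lemma 4.4]{Bak1986} yields $g_i(\alp) = I_i(\alp) + O(1)$ uniformly on $\fM$. Telescoping with the trivial bounds $|g_i|, |I_i| \ll P$ ($i \le 7$) and $\ll P^{4/5}$ ($i = 8, 9$) gives $|g_1 \cdots g_9 - I_1 \cdots I_9| \ll P^{7 + 4/5} = P^{39/5}$, whence $\cI^{(1)} - \cI^{(2)} \ll \meas(\fM) \cdot P^{39/5} \ll P^{\xi + 24/5} = o(P^{28/5})$ since $\xi < 4/5$. For the extension from $\fM$ to $\bR$, \cite[Theorem 7.3]{Vau1997} provides $|I_i(\alp)| \ll \min(P, |\alp|^{-1/3})$ for $i \le 7$ and $|I_i(\alp)| \ll \min(P^{4/5}, |\alp|^{-1/3})$ for $i = 8, 9$; applying the $|\alp|^{-1/3}$ bound to the seven large factors and the trivial bound to the two small ones gives
\[
|\cI^{(3)} - \cI^{(2)}| \ll P^{8/5} \int_{P^{\xi - 3}}^\infty \alp^{-7/3}\,\d\alp \ll P^{28/5 - 4\xi/3} = o(P^{28/5}).
\]

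The main work is the lower bound $\cI^{(3)} \gg P^{28/5}$. By Fubini's theorem and \eqref{orth1},
\[
\cI^{(3)} = \int_{\cR_0} \max\bigl(0,\, 1 - |\Phi(\bx)|\bigr)\,\d\bx, \qquad \cR_0 = (P, cP]^7 \times (P^{4/5}, 2P^{4/5}]^2,
\]
where $\Phi(\bx) = \sum_{i=1}^9 h_i(x_i)$. I would restrict $x_2, \ldots, x_7$ to short sub-intervals of the form $(P, (1+\rho)P]$ for $\rho > 0$ sufficiently small, and change variables via $y_i = h_i(x_i)$, with $|h_i'(x_i)| \asymp |a_i| P^2$ for $i \le 7$ and $\asymp |a_i| P^{8/5}$ for $i = 8, 9$. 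Under these restrictions, $y_2 + \cdots + y_7$ lies in a window of length $\asymp \rho P^3$ centred near $(a_2 + \cdots + a_7) P^3 = -a_1 c^3 P^3 / 8$, while $y_8, y_9 = O(P^{12/5})$ are negligible. Thus the target value of $y_1$ in the inequality $|\Phi(\bx)| < 1/2$ lies near $a_1 c^3 P^3 / 8$. The crucial algebraic check is that since $a_i \ne 0$ for $i = 3, \ldots, 7$ and $a_1 \le |a_2|$, we have $c^3 = 8 a_1^{-1}(|a_2| + \cdots + |a_7|) > 8$, so
\[
a_1 P^3 < a_1 c^3 P^3 / 8 < a_1 c^3 P^3,
\]
placing the target \emph{strictly inside} the image $(a_1 P^3, a_1 c^3 P^3]$ of $h_1$ on $(P, cP]$. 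Consequently there is an interval of admissible $y_1$ of length $\asymp 1$, yielding $\meas_\by(\cV) \gg \rho^6 P^{18 + 24/5} = \rho^6 P^{114/5}$ for the resulting region $\cV \subseteq \cR_0$. Dividing by the Jacobians produces $\meas_\bx(\cV) \gg P^{-14 - 16/5} \cdot \rho^6 P^{114/5} \gg P^{28/5}$, and since the integrand is $\ge 1/2$ on $\cV$ we obtain $\cI^{(3)} \gg P^{28/5}$.

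I expect the third step to be the principal obstacle: one must bookkeep signs of the leading coefficients carefully when setting up the change of variables, and verify that the precise definition $c^3 = -8 a_1^{-1}(a_2 + \cdots + a_7)$, combined with the hypotheses $a_i \ne 0$ and $a_1 \le |a_2|$, produces $c^3 > 8$ \emph{strictly}. That strict inequality is exactly what allows the target value of $y_1$ to lie in the interior of the image of $h_1$, supplying the $\asymp \rho P^3$ of freedom in each of the variables $x_2, \ldots, x_7$ that is needed to close the volume estimate.
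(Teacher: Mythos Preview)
Your proposal is correct and follows essentially the same three-step scheme as the paper: compare $\cI^{(1)}$ to $\cI^{(2)}$ via \cite[Lemma 4.4]{Bak1986}, extend $\cI^{(2)}$ to $\cI^{(3)}$ via the $|\alp|^{-1/3}$ decay of the oscillatory integrals, and then obtain a volume lower bound for $\cI^{(3)}$ by restricting $x_2,\ldots,x_7$ to a short window and checking that the induced target for $x_1$ lands strictly inside $(P,cP]$.

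The only noteworthy difference is in the volume step. You restrict $x_2,\ldots,x_7$ to $(P,(1+\rho)P]$ and rely on the \emph{strict} inequality $c^3>8$, which you correctly justify from $a_1\le |a_2|$ together with $a_3,\ldots,a_7\ne 0$. The paper instead restricts to $[(1+2\omega)P,(1+3\omega)P]$, which pushes the target for $h_1(x_1)$ up by a factor $(1+\omega)^3$ and thereby works already from the weaker bound $c\ge 2$; it then argues directly in the $x$-variables (finding $m<M$ with $h_1(m)=\Lambda$, $h_1(M)=\Lambda+1$ and using the mean value theorem to get $M-m\gg P^{-2}$) rather than through the change of variables $y_i=h_i(x_i)$. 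Both routes yield the same $P^{28/5}$, and your Jacobian bookkeeping $P^{114/5}/P^{14+16/5}=P^{28/5}$ is accurate.
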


\begin{proof}
Put
\[ I_i(\alp) = \begin{dcases}
\int_P^{cP} e(\alp h_i(x)) \d x, \qquad i=1,2,\ldots,7 \\
\int_{P^{4/5}}^{2P^{4/5}} e(\alp h_i(x)) \d x, \qquad i=8,9.
\end{dcases}
\]
Define 
\begin{align*} 
\cI^{(1)} &= \int_\fM g_1(\alp)\cdots g_9(\alp) K(\alp) \d \alp, \\
\cI^{(2)} &= \int_\fM I_1(\alp)\cdots I_9(\alp) K(\alp) \d \alp 
\end{align*}
and
\[ \cI^{(3)} = \int_\bR I_1(\alp)\cdots I_9(\alp) K(\alp) \d \alp. \]
Mimicking the proof of Lemma \ref{major1}, we deduce that 
\begin{equation} \label{d125}
\cI^{(1)} -  \cI^{(2)} = o(P^{28/5})
\end{equation}
and
\begin{equation} \label{d235}
\cI^{(2)} - \cI^{(3)} = o(P^{28/5}).
\end{equation}

Write $R = [P,cP]^7 \times [P^{4/5}, 2P^{4/5}]^2$, and consider
\[
\cI^{(3)} = \int_R \int_\bR e(\alp H(\bx)) K(\alp) \d \alp \d \bx.
\]
Let
\[ X = [(1+2\omega)P, (1+3\omega)P]^6 \times [P^{4/5}, 2P^{4/5}]^2. \]
By \eqref{orth1}, we have
\begin{equation} \label{spec}
\cI^{(3)} = \int_R \max(0, 1- |H(\bx)|) \d \bx \gg V,
\end{equation}
where $V$ is the measure of the set of $\bx \in [P,cP] \times X$ such that $|H(\bx)| \le 1/2$. In view of \eqref{d125}, \eqref{d235} and \eqref{spec}, it remains to show that
\begin{equation} \label{unif}
 \meas \{x_1 \in [P,cP] : |H(\bx)| \le 1/2 \} \gg P^{-2}, 
\end{equation}
uniformly for $(x_2, \ldots, x_9) \in X$.

Let $\bx' = (x_2, \ldots, x_9) \in X$, and put
\[ \Lambda (\bx') =  - 1/2 - \sum_{i =2}^9 h_i(x_i). \]
Then 
\begin{equation} \label{Lambound}
-(a_2 + \ldots + a_7)(1+\omega)^3 < P^{-3}\Lambda (\bx') <-(a_2+ \ldots + a_7)(1+4\omega)^3. 
\end{equation}
The polynomial $h_1$ is strictly increasing when its argument is sufficiently large. As $\Lambda(\bx')$ is large and positive, there exist unique positive real numbers $m$ and $M$ such that $h_1(m) = \Lambda(\bx')$ and $h_1(M) = \Lambda(\bx') + 1$. Since $a_1 \le |a_2|$ and $\omega$ is small, it follows from \eqref{Lambound} that
\[ 
P < m < M < cP.
\]
Any $x_1 \in [m,M]$ satisfies $|H(\bx)| \le 1/2$, and the mean value theorem gives
\[ 
(M-m)^{-1} = (M-m)^{-1} (h_1(M) - h_1(m)) \ll M^2 \ll P^2. 
\]
Thus we have \eqref{unif}, completing the proof of Lemma \ref{major5}.
\end{proof}

By Lemma \ref{major5}, symmetry and H\"older's inequality, it remains to show that
\begin{equation} \label{rest5}
\int_{\fm \cup \ft} |g_1(\alp) g_2(\alp) g_3(\alp)^5 g_8(\alp)^2| K(\alp) \d \alp = o(P^{28/5}).
\end{equation}
We can establish \eqref{rest5} in the same way as \eqref{rest1}, using Lemma \ref{fourth2} instead of Lemma \ref{fourth1}. Indeed, an inspection of that argument shows that the only detail requiring attention is the analogue of \eqref{ClassicalMinor}, which in the present setting becomes
\[ 
\int_\fn |g_i(\alp)^{7-\gam} g_8(\alp)^2| K(\alp) \d \alp  
\ll P^{3(5-\gam)/4+9/5+2\eps}= o(P^{28/5-\gam}).
\]
This completes the proof of Theorem \ref{generalisation}.

\section{Sums of six shifted cubes are dense on the reals}
\label{sixvars}

In this section we prove Theorem \ref{six}. We will need \cite[Theorem 1.4]{MM2011}, so we begin by reviewing the relevant definitions and theory. With future applications in mind, we will be fairly general here. We begin by generalising Definition \ref{irr}. Let $n$ be a positive integer.

\begin{defn} \label{GenIrr} Let $H \in \bR[y_1, \ldots, y_n]$. The polynomial $H$ satisfies the \emph{irrationality condition} if $H(\by) - H(\bzero)$ is not a multiple of a rational polynomial.
\end{defn}

For quadratic forms $Q \in \bR[y_1, \ldots, y_n]$, and for $\bxi \in \bR^n$, define the quadratic polynomial $Q_{\bxi}$ by
\[ Q_{\bxi} (\by) = Q(\by + \bxi). \]
The following is a direct consequence of \cite[Theorem 1.4]{MM2011}.

\begin{thm} \label{MM0} Let $Q$ be a nondegenerate, indefinite quadratic form in $n \ge 3$ variables, and let $\bxi \in \bR^n$. Assume that $Q_\bxi$ satisfies the irrationality condition. Then $Q_\bxi(\bZ^n)$ is dense on $\bR$.
\end{thm}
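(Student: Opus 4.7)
The plan is to derive Theorem \ref{MM0} as a direct specialisation of \cite[Theorem 1.4]{MM2011}, which concerns the values at integer points of inhomogeneous quadratic polynomials whose homogeneous quadratic part is a nondegenerate indefinite form in $n \ge 3$ variables. The first step is simply to rewrite $Q_{\bxi}$ in the canonical shape considered in \cite{MM2011}. Let $B$ denote the symmetric bilinear form associated to $Q$. Then
\[
Q_{\bxi}(\by) = Q(\by + \bxi) = Q(\by) + 2B(\by,\bxi) + Q(\bxi),
\]
so $Q_{\bxi}$ is an inhomogeneous quadratic polynomial whose quadratic part is $Q$, whose linear part is $\by \mapsto 2 B(\by,\bxi)$, and whose constant term is $Q(\bxi)$.

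Next I would verify that the hypotheses of \cite[Theorem 1.4]{MM2011} are satisfied. The quadratic part $Q$ is nondegenerate, indefinite and in $n \ge 3$ variables by assumption, which is the main geometric input. The remaining hypothesis is an irrationality condition on $Q_{\bxi}$: it must not be a rational multiple of a polynomial with rational coefficients. Since shifting by a constant changes neither the integer values attained nor whether a polynomial is a rational multiple of a rational polynomial, this hypothesis is equivalent to the assertion that
\[
Q_{\bxi}(\by) - Q_{\bxi}(\bzero) = Q(\by) + 2 B(\by,\bxi)
\]
is not a scalar multiple of a rational polynomial, which is precisely the content of Definition \ref{GenIrr} applied to $Q_{\bxi}$.

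With both conditions confirmed, \cite[Theorem 1.4]{MM2011} immediately delivers the conclusion that $Q_{\bxi}(\bZ^n)$ is dense in $\bR$. The anticipated obstacle is genuinely cosmetic: reconciling the exact phrasing of the irrationality condition in \cite{MM2011} with Definition \ref{GenIrr}. One must confirm that the constant term $Q(\bxi)$ plays no role in the irrationality hypothesis of \cite{MM2011}, which is standard since density statements are translation-invariant and Margulis--Mohammadi state their condition on the non-constant part of the polynomial. Once this bookkeeping is dispatched, there is nothing further to prove.
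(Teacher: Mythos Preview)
Your proposal is correct and matches the paper's approach exactly: the paper simply states that Theorem \ref{MM0} ``is a direct consequence of \cite[Theorem 1.4]{MM2011}'' and gives no further proof, so your expansion of this citation into a verification of hypotheses is precisely what the paper intends. One small quibble: your parenthetical claim that ``shifting by a constant changes \ldots\ whether a polynomial is a rational multiple of a rational polynomial'' is not literally true (e.g.\ $x+\pi$ versus $x$), but you correctly rescue the argument via translation-invariance of density and by noting that the Margulis--Mohammadi condition concerns the non-constant part, so no harm is done.
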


This implies a similar result for general quadratic polynomials, as we now explain. Let $q \in \bR[y_1, \ldots, y_n]$ be a quadratic polynomial, given by
\[ q(\by) = \sum_{i,j \le n} a_{ij} y_iy_j + \sum_{i \le n} b_i y_i + q(\bzero), \]
where $a_{ij}, b_i \in \bR$ and $a_{ij} = a_{ji}$ ($1 \le i, j \le n$). The \emph{homogeneous part} of $q$ is
\[ Q(\by) = \sum_{i,j \le n} a_{ij} y_iy_j. \]
Assume that $Q$ is nondegenerate, so that $A = (a_{ij})$ is invertible, and put $\bxi = \frac12 A^{-1} \bb$. Then
\begin{align*}
 Q(\by + \bxi) &= \sum_{i,j \le n} a_{ij} (y_i + \xi_i) (y_j + \xi_j) = Q(\by) + (2A \bxi) \cdot \by + Q(\bxi) \\
&= Q(\by) + \bb \cdot \by + Q(\bxi) = q(\by) + Q(\bxi) - q(\bzero).
\end{align*}
In particular, there exists $\bxi \in \bR^n$ such that $q(\by) - Q(\by+ \bxi)$ is constant. We thus arrive at the following corollary of Theorem \ref{MM0}.

\begin{cor} \label{MM} Let $n \ge 3$ be an integer, and let $q \in \bR[y_1, \ldots, y_n]$ be a quadratic polynomial satisfying the irrationality condition. Assume further that the homogeneous part of $q$ is nondegenerate and indefinite. Then $q(\bZ^n)$ is dense on $\bR$.
\end{cor}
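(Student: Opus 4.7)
The plan is to leverage the pre-statement computation to reduce the corollary immediately to Theorem~\ref{MM0}. Write $q(\by) = Q(\by) + \bb\cdot\by + q(\bzero)$, where $Q$ is the (nondegenerate, indefinite) homogeneous part and $\bb = (b_1,\ldots,b_n)$. As in the preamble, choose $\bxi = \tfrac12 A^{-1}\bb$, where $A=(a_{ij})$ is the symmetric matrix representing $Q$. Then the displayed identity above Corollary~\ref{MM} gives
\[
Q_{\bxi}(\by) = Q(\by + \bxi) = q(\by) + C, \qquad \text{where } C = Q(\bxi) - q(\bzero).
\]

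The next step is to check that $Q_{\bxi}$ inherits the irrationality condition. Since $C$ is constant,
\[
Q_{\bxi}(\by) - Q_{\bxi}(\bzero) = q(\by) - q(\bzero),
\]
so one side is a rational multiple of a rational polynomial if and only if the other is. By hypothesis $q$ satisfies the irrationality condition of Definition~\ref{GenIrr}, so $Q_\bxi$ does as well. Moreover, $Q$ is nondegenerate and indefinite by assumption and $n\ge 3$, so Theorem~\ref{MM0} applies and yields that $Q_\bxi(\bZ^n)$ is dense on $\bR$.

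Finally, since $q(\by) = Q_\bxi(\by) - C$, the set $q(\bZ^n)$ is a real translate of the dense set $Q_\bxi(\bZ^n)$, hence is itself dense on $\bR$. The entire argument is bookkeeping around the Margulis--Mohammadi input; there is no genuine obstacle, as the only point requiring verification is the transfer of the irrationality condition from $q$ to $Q_\bxi$, which follows at once from the constancy of $Q_\bxi - q$.
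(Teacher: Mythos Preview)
Your proposal is correct and is exactly the argument the paper intends: it uses the pre-corollary computation to write $q$ as a constant shift of $Q_\bxi$, verifies the irrationality condition transfers (since $Q_\bxi(\by)-Q_\bxi(\bzero)=q(\by)-q(\bzero)$), applies Theorem~\ref{MM0}, and then translates. The paper presents this as an immediate corollary without spelling out the irrationality-condition check, so your write-up is in fact slightly more explicit.
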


Now we are ready to prove Theorem \ref{six}. By fixing the variables $x_7, \ldots, x_s$ if necessary, we may evidently assume that $s = 6$, and that
\begin{equation} \label{wlog2}
0 < \mu_1, \ldots, \mu_6 \le 1.
\end{equation} 
Let $y_1, y_2, y_3 \in \bZ$, and let $a$ equal 3 or 4. Put $x_1 = a+y_1$, $x_4 = -y_1$, $x_2 = y_2$, $x_5 = -y_2$, $x_3 = y_3$, and $x_6 = -y_3$. Now $ F(\bx) = f(a) + 3 \bv \cdot \bc(a)$, where $f(a)$ depends only on $a$ (and $\bmu$), 
\[ \bv = (y_1^2, y_1, y_2^2, y_2, y_3^2, y_3) \]
and
\[ \bc(a) = \begin{pmatrix}
a-  \mu_1 -  \mu_4 \\
(a-\mu_1)^2 - \mu_4^2 \\
-\mu_2 - \mu_5 \\
 \mu_2^2 - \mu_5^2 \\
-\mu_3 - \mu_6 \\
 \mu_3^2 - \mu_6^2
\end{pmatrix}.
\]

In particular, if we choose $a=3$ or $a=4$, then $f(a) + 3\bv \cdot \bc(a)$ is a quadratic polynomial in $\by$, whose homogeneous part is nondegenerate and indefinite. Note that we used \eqref{wlog2} to ensure this. By Corollary \ref{MM}, it remains to show that there exists $a \in \{3,4\}$ such that the entries of $\bc(a)$ are not all in rational ratio. 

Suppose
\[ \frac{ (3-\mu_1)^2 - \mu_4^2}{3-  \mu_1 -  \mu_4 } = 3- \mu_1 + \mu_4 \]
is rational. Then $\mu_1 + \mu_4 \notin \bQ$, since $\mu_1 \notin \bQ$. Now
\[ \frac{4- \mu_1 - \mu_4}{3- \mu_1 - \mu_4} \notin \bQ, \]
so at least one of $(3- \mu_1 - \mu_4) / (-\mu_2 - \mu_5)$ and $(4- \mu_1 - \mu_4) / (-\mu_2 - \mu_5)$ is irrational. This completes the proof of Theorem \ref{six}.

\bibliographystyle{amsbracket}
\providecommand{\bysame}{\leavevmode\hbox to3em{\hrulefill}\thinspace}

\end{document}